\newcommand{\finmi}{\mathfrak{m}_i}
\newcommand{\finli}{\mathfrak{l}_i}
\newcommand{\finpi}{\mathfrak{p}_i}
\newcommand{\finr}{\mf{r}}
\newcommand{\sDelta}{\Delta}
\newcommand{\UU}{\widetilde{U}}
\newcommand{\sIrr}[1]{L_{#1}^{\fing}}
\newcommand{\HC}{\Upsilon}
\newcommand{\zeroset}{\mathcal{V}}
\newcommand{\Vs}[1]{L(#1 \Lam_0)}
\newcommand{\affrho}{\widehat{\rho}}
\newcommand{\bw}[1]{\bigwedge\nolimits^{#1}}
\newcommand{\wh}{\widehat}
\newcommand{\affm}{\wh{\mf{m}}}
\newcommand{\mc}{\mathcal}
\newcommand{\mf}{\mathfrak}
\newcommand{\on}{\operatorname}
\newcommand{\affP}{\affh^*_{k,\Z}}
\newcommand{\Vg}[1]{V^{#1}(\fing)}
\newcommand{\afft}{\widehat{\mathfrak{t}}}
\newcommand{\finb}{\mathfrak{b}}
\newcommand{\finn}{\mathfrak{n}}
\newcommand{\isomap}{{\;\stackrel{_\sim}{\to}\;}}
\newcommand{\finW}{W}
\newcommand{\nc}{\newcommand}
\nc{\Hp}[1]{H^{#1}}
\newcommand{\sPi}{\Pi}
\newcommand{\affn}{\widehat{\mathfrak{n}}}
\newcommand{\affb}{\widehat{\mathfrak{b}}}
\newcommand{\affl}{\widehat{\mathfrak{l}}}
\newcommand{\sW}{W}
\newcommand{\eW}{\widetilde{W}}
\newcommand{\affh}{\widehat{\mathfrak{h}}}
\newcommand{\affg}{\widehat{\mathfrak{g}}}
\newcommand{\fing}{\mathfrak{g}}
\newcommand{\finh}{\mathfrak{h}}
\newcommand{\finm}{\mathfrak{m}}
\newcommand{\sL}{{L}}
\newcommand{\slam}{\bar{\lam}}
\newcommand{\sroots}{\roots}
\newcommand{\Lamsemi}[1]{\bigwedge\nolimits^{\frac{\infty}{2}+#1}}
\newcommand{\sBGG}{\BGG^{\fing}}
\newcommand{\Irr}[1]{L(#1)}
\newcommand{\BGG}{{\mathcal O}}
\newcommand{\N}{\mathbb{N}}
\newcommand{\Q}{\mathbb{Q}}
\newcommand{\1}{{\mathbf{1}}}
\newcommand{\dual}[1]{{#1}^*}
\newcommand{\bra}{{\langle}}
\newcommand{\ket}{{\rangle}}
\newcommand{\roots}{\Delta}
\newcommand{\Lam}{\Lambda}
\newcommand{\lam}{\lambda}
\newcommand{\ra}{\rightarrow}
\newcommand{\+}{\mathop{\oplus}}
\newcommand{\Z}{\mathbb{Z}}
\newcommand{\cprime}{$'$}
\newcommand{\inv}{^{-1}}
\renewcommand{\*}{{\otimes}}
\newcommand{\C}{\mathbb{C}}
\newcommand{\che}{^{\vee}}
\newcommand{\finp}{{\mathfrak{p}}}
\newcommand{\finl}{\mathfrak{l}}
\theoremstyle{plain}
\newtheorem{Th}{Theorem}[section]
\newtheorem*{MainTh}{Main Theorem}
\newtheorem{Pro}[Th]{Proposition}
\newtheorem{Lem}[Th]{Lemma}
\newtheorem{Co}[Th]{Corollary}
\theoremstyle{definition}
\theoremstyle{remark}
\newtheorem{Rem}[Th]{Remark}
\newcommand{\affW}{\widehat{W}}
\newcommand{\semiinf}{\frac{\infty}{2}}
\newcommand{\Zhu}{A}
\DeclareMathOperator{\Aut}{Aut}
\DeclareMathOperator{\id}{id}
\DeclareMathOperator{\End}{End}
\DeclareMathOperator{\gr}{gr}
\DeclareMathOperator{\ad}{ad}
\DeclareMathOperator{\haru}{span}
\title{Rationality of
admissible affine Vertex algebras
in the category $\BGG$
}
\author{Tomoyuki Arakawa}
\address{Research Institute for Mathematical Sciences, Kyoto University,
 Kyoto 606-8502 JAPAN}
\email{arakawa@kurims.kyoto-u.ac.jp}
\thanks{This work is partially  supported 
by the JSPS Grant-in-Aid  for Scientific Research (B)
No.\ 20340007 and JSPS Grant-in-Aid for Challenging Exploratory
Research No. 23654006}
\subjclass[2000]{17B69, 17B67, 17B08, 17B55}
\begin{document}
\maketitle

\begin{abstract}
We study the vertex algebras associated with
modular invariant
 representations
of affine Kac-Moody algebras
at fractional  levels,
whose 
simple highest weight modules
are classified by Joseph's characteristic varieties.
We show that an irreducible highest weight representation
of a non-twisted  affine Kac-Moody algebra
at  an admissible  level $k$
is a module over  the
associated 
simple  affine vertex algebra
if and only if it is an
 admissible representation whose  integral root system
is isomorphic to that of 
 the vertex algebra itself.
This  in particular
 proves the conjecture of Adamovi\'c and Milas \cite{AdaMil95}
on the rationality of admissible affine vertex algebras
in the category $\BGG$.
\end{abstract}

\section{Introduction}
Let $\fing$ be a complex simple Lie algebra,
$\affg$ the non-twisted affine Kac-Moody algebra associated with
$\fing$,
$\Vg{k}$   the universal affine vertex algebra
associated with $\fing$ at a non-critical level $k$,
$\Vs{k}$ the unique simple quotient of $\Vg{k}$.
The simple affine vertex algebra
$\Vs{k}$ 
is called {\em admissible} if it is isomorphic
to  an admissible representation \cite{KacWak89}
as  a $\affg$-module.
The purpose of this article is to
classify  simple
modules over 
admissible affine vertex algebras.

By a well-known result of Zhu \cite{Zhu96},
there is a one-to-one correspondence
between
 positively graded  
simple modules over
a graded  vertex algebra $V$
and simple $\Zhu(V)$-modules, where
$\Zhu(V)$ is {\em Zhu's algebra} of $V$.
In the case that
$V$ is an affine vertex algebra
$\Vs{k}$,
we have
\begin{align}
 \Zhu(\Vs{k})\cong U(\fing)/I_k
\label{eq:zhu-of-simple}
\end{align}
for some
two sided-ideal $I_k$ of the universal enveloping algebra
$U(\fing)$ of $\fing$.
Since  a simple $U(\fing)$-module $M$
is an $\Zhu(\Vs{k})$-module if and only if  the annihilating ideal of $M$
in $U(\fing)$
contains  $I_k$,
our problem amounts to 
classify the primitive ideals of $U(\fing)$ containing  $I_k$.
Because any primitive ideal of $U(\fing)$ is the annihilating ideal
of a highest weight representation of $\fing$ \cite{Duf77},
it suffices to classify simple
highest weight representations of $\Zhu(\Vs{k})$,
or equivalently,
to classify
 simple $\Vs{k}$-modules in the category $\BGG$
of $\affg$.

Let $L(\lam)$ be the irreducible highest weight representation
of $\affg$ with highest weight $\lam$.
\begin{MainTh}
Let $k$ be an admissible number\footnote{A complex number $k$ is called
admissible if $\Irr{k\Lam_0}$ is an admissible representation.
},
$\lam$ a  weight of $\affg$ of level $k$.
Then $L(\lam)$ is a module over $\Vs{k}$ if and only if
it is an admissible representation   whose integral root system is isomorphic to
that of  $\Vs{k}$.
In particular
any $\affg$-module from 
the category $\BGG$
is an $\Irr{k\Lam_0}$-module if and only if 
it is a direct sum of 
such admissible representations of $\affg$ of level $k$.
 
\end{MainTh}

The second statement of
Main Theorem
was conjectured by
Adamovi\'c and Milas \cite[Conjecture 3.5.7]{AdaMil95}.
It
  has been proved 
in some special cases:
for type $C_{\ell}^{(1)}$ admissible half integer levels
by Adamovi\'c \cite{Ada94};
for $\wh{\mf{sl}}_2$
by Adamovi\'c and Milas \cite{AdaMil95}, Dong, Li and Mason
\cite{DonLiMas97}
and Feigin and Malikov \cite{FeiMal97};
for some cases in type $A_{\ell}^{(1)}$, $B_{\ell}^{(1)}$
admissible half integer levels by Per{\v{s}}e
\cite{Per07, Per08};
for a few cases in
type $G_2^{(1)}$
admissible one-third integer levels
by
Axtell and Lee
\cite{AxtLee11}.
These works are based on the explicit
computation of the singular vector of $\Vg{k}$
which generates the maximal ideal.
Our method in this article is 
completely  different.

Let us  explain  the  outline of the proof of Main Theorem
briefly.
By  (\ref{eq:zhu-of-simple})
simple highest weight $\Vs{k}$-modules
are classified by 
Joseph's {\em characteristic variety}
\cite{Jos77}
$\zeroset(I_k)$ of $I_k$,
which is a Zariski closed subset of
the dual
$\finh^*$ of the Cartan subalgebra $\finh$
of $\fing$ (Proposition \ref{Pro:zhu-vs-joseph}).
We deduce the ``if'' part  of Main Theorem
 from
a result of
Frenkel and Malikov \cite{FreMal97,MalFre99}
which states
 that every $G$-integrable admissible representation
at level $k$ is a 
module over $\Vs{k}$,
together with
an affine analogue  of the Duflo-Joseph Lemma
(Lemma \ref{Lem:affin-Duflo-Joseph}) for  
characteristic varieties.
We prove the ``only if'' part of Main Theorem 
by 
reducing to the
$\wh{\mf{sl}}_2$-cases \cite{AdaMil95}
using the {\em semi-infinite restriction functor}
$H^{\semiinf+i}(\finm^{(i)}[t,t\inv],?)$
studied in \cite{A-BGG},
where $\finm^{(i)}$ is the nilradical of a minimal parabolic subalgebra
of $\fing$,
see \S \ref{Lem:top-part-of-cohomology}
for the details.

It should be mentioned that there is another variety naturally associated with
$\Zhu(\Vs{k})$,
that is,
 the zero set
$V(\gr I_k)$ of the associated graded ideal $\gr I_k$ of
$\C[\fing^*]$.
We have a  surjection
\begin{align}\label{eq:surjection}
R_{\Vs{k}}
\twoheadrightarrow \gr \Zhu(\Vs{k})=\C[\fing^*]/\gr I_k
\end{align}of Poisson algebras,
where
$R_{\Vs{k}}$ is Zhu's $C_2$-algebra of 
$\Vs{k}$,
see \cite[Proposition 3.3]{ALY}.
Hence
$V(\gr I_k)$ is contained in 
the {\em associated variety} \cite{Ara12}
$X_{\Vs{k}}=\on{Specm} R_{\Vs{k}}$.
Although (\ref{eq:surjection})
is not an isomorphism in general,
in a subsequent paper \cite{A2012Dec} we prove the following:
\begin{Th}\label{Th;subsquence}
Let $k$ be an admissible number,
with $k+h\che=p/q$,
$p,q\in \N$, $(p,q)=1$.
Then (\ref{eq:surjection})
induces the  isomorphism
of varieties
\begin{align*}
V(\gr I_k)\cong X_{\Vs{k}}.
\end{align*}
Namely \cite{Ara09b},
we have 
\begin{align*}
V(\gr I_k)
\cong \begin{cases}
   \{x\in \fing; (\ad x)^{2q}=0\}&\text{if }(r\che, q)=1,\\
\{x\in \fing; \pi_{\theta_s}(x)^{2q/r\che}=0\} &\text{if }(r\che,q)=r\che,
\end{cases}\end{align*}
which is an
irreducible $G$-invariant
subvariety  
 of $\fing^*\cong \fing$.
Here $r\che$ is the lacing number of $\fing$,
$\theta_s$ is the highest short root of $\fing$
and $\pi_{\theta_s}$ is the irreducible finite-dimensional
representation of $\fing$
with highest weight $\theta_s$.
\end{Th}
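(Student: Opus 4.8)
The plan is to prove the set-theoretic equality $X_{\Vs{k}}=V(\gr J_k)$ by two opposite inclusions; since both sides are reduced subvarieties of $\fing^*$, this already yields the asserted isomorphism of varieties, and a fortiori shows that the kernel of (\ref{eq:surjection}) is nilpotent. The inclusion $V(\gr J_k)\subseteq X_{\Vs{k}}$ is immediate: dualizing the surjection (\ref{eq:surjection}) of Poisson algebras gives a closed immersion $V(\gr J_k)\hookrightarrow X_{\Vs{k}}$. By the result of \cite{Ara09b} recalled in the statement, $V(\gr J_k)$ is the closure $\ol{\mb{O}}$ of a single nilpotent orbit $\mb{O}\subset\Nil$, so this first inclusion reads $\ol{\mb{O}}\subseteq X_{\Vs{k}}$. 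As $R_{\Vs{k}}$ is a graded Poisson quotient of $R_{\Vg{k}}=\C[\fing^*]$, the variety $X_{\Vs{k}}$ is conic and $\Ad G$-invariant, so everything comes down to the reverse inclusion $X_{\Vs{k}}\subseteq\ol{\mb{O}}$.

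The first step toward this upper bound is to show $X_{\Vs{k}}\subseteq\Nil$. Admissibility of $k$ forces the maximal submodule $N_k\subset\Vg{k}$ to be nonzero, and the symbols of its elements generate the defining ideal of $R_{\Vs{k}}$ inside $\C[\fing^*]$. Using the singular vectors of \cite{KacWak89} one checks, as in \cite{Ara12}, that these relations make every positive-degree $\Ad G$-invariant of $\C[\fing^*]$ vanish on $X_{\Vs{k}}$, so that $X_{\Vs{k}}$ lies in the nilpotent cone. Hence $X_{\Vs{k}}$ is a finite union of nilpotent orbit closures containing $\ol{\mb{O}}$, and it remains to exclude every orbit $\mb{O}'$ with $\mb{O}'\not\subseteq\ol{\mb{O}}$.

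To carry out this exclusion I would pass to $W$-algebras via the Drinfeld-Sokolov reduction functor $H^{\semiinf+0}_{f}(-)$ attached to a nilpotent $f$. For modules in the category $\BGG$ the higher reduction cohomology vanishes, so the reduction is concentrated in cohomological degree $0$, and the comparison of associated varieties under this reduction \cite{Ara12} identifies the associated variety of $H^{\semiinf+0}_{f}(\Vs{k})$ with $X_{\Vs{k}}\cap\mc{S}_f$, where $\mc{S}_f=f+\ker(\ad e)$ is the Slodowy slice. Since $f\in\mc{S}_f$, the vanishing $H^{\semiinf+0}_{f}(\Vs{k})=0$ forces $f\notin X_{\Vs{k}}$; it therefore suffices to prove $H^{\semiinf+0}_{f}(\Vs{k})=0$ for every $f$ outside $\ol{\mb{O}}$. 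This I would extract from the Kac-Wakimoto character formula for the admissible module $\Vs{k}=\Irr{k\Lam_0}$ \cite{KacWak89}: by the Euler-Poincar\'e principle $\ch H^{\semiinf+0}_{f}(\Vs{k})$ equals the Drinfeld-Sokolov reduction of this character, and the resulting alternating sum cancels identically exactly when $f\notin\ol{\mb{O}}$, giving $X_{\Vs{k}}\subseteq\ol{\mb{O}}=V(\gr J_k)$.

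The main obstacle is this final vanishing. Since (\ref{eq:surjection}) need not be an isomorphism of Poisson algebras---its kernel is nilpotent but in general nonzero---the computation of $V(\gr J_k)$ in \cite{Ara09b} cannot be transported directly to $R_{\Vs{k}}$, and the relations cutting $X_{\Vs{k}}$ down to $\ol{\mb{O}}$ must be produced intrinsically. Proving the character cancellation uniformly is delicate precisely because it is where the lacing number $r\che$ and the arithmetic of $q$ enter, separating the two cases $(r\che,q)=1$ and $(r\che,q)=r\che$ of the statement. For $\wh{\mf{sl}}_2$ the requisite vanishing can be read off from the Malikov-Feigin-Fuks singular vector \cite{FeiMal97}, and I would bootstrap the general case from this base through the reduction functor.
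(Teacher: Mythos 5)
You should know at the outset that the paper contains no proof of this theorem: it is stated in the introduction with the proof explicitly deferred to the subsequent paper \cite{A2012Dec}, and the closed formula for $V(\gr J_k)$ is quoted from \cite{Ara09b}. So your attempt can only be compared with the strategy of those works. Your preparatory steps are consistent with it: the inclusion $V(\gr J_k)\subseteq X_{\Vs{k}}$ from the surjection (\ref{eq:surjection}), the conic $G$-invariance of $X_{\Vs{k}}$, the containment $X_{\Vs{k}}\subseteq \mc{N}$ via the symbol of the Kac--Wakimoto singular vector (this is indeed \cite{Ara09b}), and the reduction of the whole problem, through the theorem $X_{H^{\semiinf+0}_{f}(V)}=X_V\cap \mc{S}_f$ together with the contracting $\C^*$-action on the slice, to proving the vanishing $H^{\semiinf+0}_{f}(\Vs{k})=0$ for $f\notin V(\gr J_k)$.

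The genuine gap is at precisely that decisive step. You assert that the Euler--Poincar\'e character of the reduction ``cancels identically exactly when $f\notin\ol{\mb{O}}$,'' but no such computation is offered, and none is known: for non-principal $f$ the reduced Euler--Poincar\'e sum of the Kac--Wakimoto character has no known closed evaluation, the cases where a character-cancellation criterion exists being essentially principal $f$ (Frenkel--Kac--Wakimoto, proved by Arakawa) and minimal $f$. Worse, once you have imported $X_{H^{\semiinf+0}_{f}(V)}=X_V\cap\mc{S}_f$, the vanishing $H^{\semiinf+0}_{f}(\Vs{k})=0$ is logically \emph{equivalent} to $f\notin X_{\Vs{k}}$, which is the statement being proved; so unless the cancellation is established independently, the argument begs the question at its core, and the proposed ``bootstrap from $\wh{\mf{sl}}_2$ via Malikov--Feigin--Fuks vectors'' is not a mechanism, since the reduction $H_f$ for general $f$ is not an iteration of $\mf{sl}_2$-reductions. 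The actual proof closes the loop in a different way, and --- this is the point your proposal misses entirely --- through the Main Theorem of the present paper: the simple highest weight $\Vs{k}$-modules are exactly the admissible $\Irr{\lam}$ with integral root system isomorphic to that of $\Vs{k}$, so every primitive ideal of $U(\fing)$ containing $J_k$ is $\Ann_{U(\fing)}\sIrr{\bar\lam}$ for such $\lam$, and Joseph-theoretic computations of the associated varieties of these primitive ideals (together with Zhu-algebra/finite $W$-algebra considerations translating nonvanishing of $H^{\semiinf+0}_{f}(\Vs{k})$ into $f\in \on{Var}(J_k)$) yield the upper bound $X_{\Vs{k}}\subseteq V(\gr J_k)=\ol{\mb{O}}$, rather than any character identity. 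This is exactly why the theorem is presented in this paper as a consequence, in \cite{A2012Dec}, of the classification proved here.
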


Although 
they themselves are 
not  rational 
in the usual 
sense\footnote{However Main Theorem implies that
admissible affine  vertex algebras are rational 
in the sense of \cite{DonLiMas97}.
Also, the fact \cite{Ara09b}
that $X_{L(k\Lam_0)}$ is contained in the nilpotent cone
of
$\fing$
implies that they are
 $C_2$-cofinite in the sense of \cite{DonLiMas97}.}
it has been  conjectured 
\cite{FKW92,KacWak08}
that
admissible affine
vertex algebras 
produce 
rational 
$W$-algebras 
in many cases
by the method of the 
 (generalized)  quantum Drinfeld-Sokolov reduction.
By applying  the result in this article
in a subsequent paper \cite{A2012Dec}
we prove the rationality 
of 
all the minimal series principal $W$-algebras \cite{FKW92}.

\smallskip

This paper is organized as follows.
In \S \ref{section:Affine vertex algebras and Joseph's characteristic
varieties}
we establish the relationship between Joseph's characteristic
varieties and the representation theory of affine vertex algebras
and prove an affine analogue of
the Duflo-Joseph Lemma (Lemma \ref{Lem:affin-Duflo-Joseph}).
In \S \ref{section:Kac-Wakimoto Admissible representations}
we prove  the ``if part'' of Main Theorem.
In \S \ref{section:Semi-infinite restriction functors}
we prove the ``only if part'' of Main Theorem.
In appendix 
we apply Main Theorem to prove a semi-infinite analogue of 
Kostant's generalized  Borel-Weil-Bott Theorem \cite{Kos61}
for admissible representations announced in \cite{A-BGG}.

\subsection*{Acknowledgments}
Some part of this work was done
while the author was visiting
 Weizmann Institute, Israel, in May 2011,
Emmy Noether Center in Erlangen,
Germany
 in June 2011, 
Isaac Newton Institute for Mathematical Sciences,
UK, in 2011,
The University of Manchester,
University of Birmingham,
The University of Edinburgh,
 Lancaster University,
York University, UK,  in November 2011,
Academia Sinica, Taiwan, in December 2011.
He is grateful to those institutes
for their hospitality.

\section{Affine vertex algebras and Joseph's characteristic varieties}
\label{section:Affine vertex algebras and Joseph's characteristic varieties}

Let $\fing$ be  a complex simple Lie algebra
of rank $l$.
Fix
a triangular decomposition
\begin{align*}
\fing=\finn_-\+ \finh\+ \finn_+,
\end{align*}
with  
a Cartan subalgebra $\finh$ of $\fing$.
We will often identify $\finh$     with $\finh^*$
using
  the normalized invariant bilinear form  $(~|~)$  of $\fing$.
Let $\Delta$, $\Delta_+$,
$\Delta_-$,
$\Pi=\{\alpha_1,\dots,\alpha_l\}$
be the 
sets of the roots, positive roots,
 negative roots,
and simple roots  of $\fing$,
respectively.
Also, let
$\theta$ be the highest root of $\fing$,
$\theta_s$ the highest short root,
$\rho$  the half sum of the positive roots,
 $\sW=\bra s_{\alpha};
\alpha\in \Delta \ket\subset \Aut \finh^*$ the Weyl group of $\fing$.
Here $s_{\alpha}(\lam)=\lam-\bra \lam,\alpha\che\ket \alpha$,
$\alpha\che=2\alpha/(\alpha|\alpha)$.
We set $s_i=s_{\alpha_i}$ for $i=1,\dots,l$.
Let $w\circ \lam=w(\lam+\rho)-\rho$
for $w\in W$, $\lam\in \dual{\finh}$.

Denote by $\sBGG$  the Bernstein-Gelfand-Gelfand
category $\BGG$ of $\fing$,
 by $\sIrr{\lam}$ 
 the irreducible highest weight representation of 
$\fing$
with highest weight $\lam\in \dual{\finh}$.

Set
$U(\fing)^{\finh}:=\{u\in U(\fing);
[h,u]=0\ \text{for all }h\in \finh\}$,
and 
let
\begin{align*}
 \Upsilon:U(\fing)^{\finh}\ra U(\finh)
\end{align*}
be the restriction of 
the projection
$
U(\fing)=U(\finh)\+  (\finn_- U(\fing)+U(\fing)\finn_+)
\ra U(\finh)
$ to $U(\fing)^{\finh}$.
One knows that
$\Upsilon$ is an algebra homomorphism.

For   a two-sided ideal $I$ of 
$U(\fing)$,
the  {\em characteristic variety} \cite{Jos77}  (without the $\rho$-shift)
of $I$
is defined as
\begin{align*}
 \zeroset(I)=\{\lam\in \dual{\finh};
p(\lam)=0\text{ for all }p\in \HC(I^{\finh})\},
\end{align*}
where $I^\finh=I\cap U(\fing)^{\finh}$.
\begin{Lem}\label{lemma:property-of-characteristic-vareity}
For  $\lam\in \dual{\finh}$,
$ \lam \in \zeroset(I)$ if and only if 
$ I \sIrr{\lam}=0$.
\end{Lem}
\begin{proof}
 The ``if part'' is obvious.
To see the converse,
suppose that
 $I\sIrr{\lam}\ne 0$.
Then  $I\sIrr{\lam}=\sIrr{\lam}$ since $\sIrr{\lam}$ is simple.
It follows that there exists $a\in I^{\finh}$
such that $v_{\lam}=a v_{\lam}=\HC(a)(\lam)v_{\lam}$,
where $v_{\lam}$ is the highest weight vector of $\sIrr{\lam}$.
Hence $\lam\not \in \mc{V}(I)$.
\end{proof}
By  Lemma \ref{lemma:property-of-characteristic-vareity},
the characteristic variety
 $\zeroset(I)$  of $I$
classifies simple $U(\fing)/I$-module in the 
category $\sBGG$.

The following fact is useful for us. 
\begin{Lem}[{\cite{Duf77}, \cite[Lemma 2]{Jos77}}]
\label{Lemma:Duflo}
Let $I$ be a two-sided ideal of $U(\fing)$,
$\lam\in \zeroset(I)$.
Suppose that
$\bra \lam+\rho,\alpha_i\che\ket \not \in \N$ for 
$\alpha_i\in \Pi$.
Then
$s_i\circ \lam\in \zeroset(I)$.
\end{Lem}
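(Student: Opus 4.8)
The plan is to localize the whole question to the rank-one $\mathfrak{sl}_2$ attached to $\alpha_i$ and then to interpolate. By (\ref{eq:property-of-characteristic-vareity}) the hypothesis $\lam\in\zeroset(I)$ is the same as $I\sIrr{\lam}=0$, while the conclusion $s_i\circ\lam\in\zeroset(I)$ unwinds, by the definition of $\zeroset$, to the assertion that $\HC(u)(s_i\circ\lam)=0$ for every $u\in I_0$. So I would fix $u\in I_0$; in particular $u$ annihilates $\sIrr{\lam}$. Let $M(\lam)$ be the Verma module of highest weight $\lam$ with highest weight vector $v_\lam$, and let $e_i,f_i$ be root vectors for $\pm\alpha_i$. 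The weight $\lam-m\alpha_i$ has multiplicity one in $M(\lam)$, spanned by $f_i^m v_\lam$, so the weight-zero element $u$ acts on each line $\C f_i^m v_\lam$ by a scalar $c_m(\lam)$, with $c_0(\lam)=\HC(u)(\lam)$.

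The first step is to show $c_m(\lam)=0$ for all $m\in\Z_{\geq 0}$. Put $n:=\bra\lam+\rho,\alpha_i\che\ket$, so $\bra\lam,\alpha_i\che\ket=n-1$. The submodule $N:=U(\mathfrak{s}_i)v_\lam=\sum_{m\geq 0}\C f_i^m v_\lam$, for the $\mathfrak{sl}_2$-triple $\mathfrak{s}_i=\bra e_i,\alpha_i\che,f_i\ket$, is the $\mathfrak{sl}_2$-Verma module of highest weight $n-1$, and it is irreducible precisely because $n-1\notin\Z_{\geq 0}$, i.e.\ because $n\notin\N$. Hence the composite $N\hookrightarrow M(\lam)\twoheadrightarrow\sIrr{\lam}$ is nonzero, since it sends $v_\lam$ to the nonzero highest weight vector of $\sIrr{\lam}$; as $N$ is irreducible it is therefore injective, so $f_i^m v_\lam$ has nonzero image in $\sIrr{\lam}$ for every $m$. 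Since $u$ kills $\sIrr{\lam}$, this forces $c_m(\lam)=0$ for all $m\in\Z_{\geq 0}$.

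It remains to pass from the integers to the reflected weight. The structural input I would establish is that $m\mapsto c_m(\lam)$ is the restriction to $\Z_{\geq 0}$ of a polynomial $C_u(\lam,m)$ in $m$ whose coefficients are polynomial in $\lam$; this comes from straightening $u\,f_i^m$ by PBW, using $[e_i,f_i^m]=m\,f_i^{m-1}(\alpha_i\che-m+1)$, so that the only $m$-dependence produced is polynomial. Granting this, a polynomial in $m$ vanishing at every nonnegative integer vanishes identically, so $C_u(\lam,\cdot)\equiv 0$ and in particular $C_u(\lam,n)=0$. Finally I would identify $C_u(\lam,n)=\HC(u)(s_i\circ\lam)$ by interpolation in $\lam$: whenever $n=\bra\lam+\rho,\alpha_i\che\ket\in\N$, the vector $f_i^{\,n}v_\lam$ is a singular vector of weight $s_i\circ\lam$, whence $C_u(\lam,n)=c_n(\lam)=\HC(u)(s_i\circ\lam)$; both sides are polynomial in $\lam$ and agree on the Zariski-dense union $\bigcup_{k\in\N}\{\lam:\bra\lam+\rho,\alpha_i\che\ket=k\}$ of parallel hyperplanes, hence agree for all $\lam$. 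Combining the two identities gives $\HC(u)(s_i\circ\lam)=C_u(\lam,n)=0$, which is what we need.

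The main obstacle will be the polynomiality of $m\mapsto c_m(\lam)$: the reduction, the irreducibility of the $\mathfrak{sl}_2$-string, and the interpolation are all either formal or standard, but verifying that moving $u$ past $f_i^m$ produces no worse than polynomial dependence on $m$ requires careful PBW bookkeeping of the commutators involved (alternatively, one can work with the generic Verma module over $\C[\finh^*]$ and read off the scalar as a single regular function of $(\lam,m)$).
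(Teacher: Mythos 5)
Your proof is correct, and it is essentially the argument the paper itself points to: the lemma is stated without proof, with references to \cite{Duf77} and \cite[Lemma 2]{Jos77}, and your scheme --- acting by $u\in I_0$ on the multiplicity-one string $\C f_i^m v_\lam$ (one-dimensional since $\alpha_i$ is the only positive root proportional to $\alpha_i$), deducing $c_m(\lam)=0$ for all $m\in\Z_{\geq 0}$ from the irreducibility of the $\mf{sl}_2$-Verma submodule when $\bra\lam+\rho,\alpha_i\che\ket\notin\N$, and then evaluating the polynomial $C_u(\lam,t)$ at $t=\bra\lam+\rho,\alpha_i\che\ket$ via the singular-vector identity $c_n(\lam)=\HC(u)(s_i\circ\lam)$ on the hyperplanes $\bra\lam+\rho,\alpha_i\che\ket=n\in\N$ --- is precisely the classical Duflo--Joseph interpolation argument, the same one the paper re-runs in the affine setting in Lemma \ref{Lem:Joseph}. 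The two points you flag as delicate both check out: polynomiality of $c_m(\lam)$ in $(m,\lam)$ follows from $e_if_i^m=f_i^me_i+mf_i^{m-1}(h_i-m+1)$ together with the local nilpotency of $\ad f_i$ (so the straightening terminates in a number of steps independent of $m$, with binomial coefficients as the only $m$-dependence), and the scalar action of $u$ on a singular vector of weight $\mu$ really is $\HC(u)(\mu)$ because the $\finn_-U(\fing)$-component of $u$ moves the generated highest weight module to strictly lower weights.
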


Let \begin{align*}
\affg=\fing[t,t\inv]\+ \C K
\end{align*}
be the affine Kac-Moody algebra
associated with $\fing$  as in Introduction.
The commutation relations of $\affg$
are given by
\begin{align*}
& [xt^m,yt^n]=[x,y]t^{m+n}+ m(x|y)\delta_{m+n,0}K,
\quad [K,\affg]=0
\end{align*}
for $x,y\in \fing$,
$m,n\in \Z$.
Let
\begin{align*}
 \affg=\affn_-\+ \affh\+ \affn_+
\end{align*}
be the triangular decomposition of
$\affg$,
where
$\affn_-=\finn_-\+ \fing[t\inv]t\inv$,
$\affh=\finh\+\C K$,
$\affn_+=\finn_+\+ \fing[t]t$.
Let $\dual{\affh}=\finh^*\+ \C \Lam_0$ be the dual
of 
$\affh$,
where $\Lam_0(K)=1$, $\Lam_0(\finh)=0$.
For $\lam\in \dual{\affh}$,
denote by  $\bar \lam\in \dual{\finh}$ the restriction of $\lam$ to $\finh$.

Let $\wh{\Delta}^{re}$
be the
set of
real roots of
$\affg$
in the dual $\tilde{\finh}^*
=\dual{\affh}\+ \C \delta$ of the extended Cartan subalgebra
$\tilde{\finh}=\affh\+ \C D$,
$\wh{\Delta}^{re}_+$
the set of positive real roots,
$\wh{\Pi}=\{\alpha_0,\alpha_1,\dots,\alpha_{l}\}$
the
 the set of simple roots of $\affg$,
where
$\alpha_0=-\theta+\delta$.

Let $\affW=\sW\ltimes Q\che$,
the Weyl group of $\affg$,
which is generated by $s_{0},s_1,\dots,s_{l}$.
Here 
 $Q\che=\sum_{\alpha\in \Delta}\Z\alpha\che$,  the coroot lattice of
 $\fing$,
and  $s_0$ is the reflection corresponding to $\alpha_0$.
Let $\eW=\sW\ltimes P\che$,
the extended Weyl group of $\affg$,
where $P\che$  is
 the coweight lattice
of $\fing$.
We write
$t_{\lam}$ for 
the element of $\affW$
corresponding to $\lam \in P\che$.
It holds that
\begin{align*}
\eW=\eW_+\ltimes \affW,
\end{align*}
where
$\eW_+$ is the subgroup
of
$\eW$ consisting of elements which fix the set $\wh{\Pi}$.
Write
$\theta=\sum_{i=1}^la_i \alpha_i$
and set
$J=\{i\in \{0,1,\dots, l\};
a_i=1\}$.
The group $\eW_+$ is described as 
\begin{align}
 \eW_+=\{t_{\bar \Lam_j}w_j
; j\in J\},
\label{eq:elements-of-eW-of-Dynkin-auto}
\end{align}
where 
$\bar \Lam_j$ is the $j$-th fundamental weight of $\fing$
and $w_j$ is the unique element of $\sW$
which fixes the set $\{\alpha_1,\dots,\alpha_{l},-\theta\}$
and $w_j(-\theta)=\alpha_j$.
We have the isomorphism
$\eW_+\cong P\che/Q\che$,
$t_{\bar \Lam_j}w_j\mapsto \bar \Lam_j+Q\che$.

Let $k\in \C$.
Denote by $U_k(\affg)$  the quotient of the universal enveloping algebra
$U(\affg)$ 
by the ideal generated by $K-k\id$,
and let
\begin{align*}
 \widetilde{U}_k(\affg):=\lim_{\longleftarrow\atop N}U_k(\fing)/
(U_k(\fing)\fing[t]t^N),
\end{align*}
the {\em  completed universal enveloping  algebra} of $\affg$
(\cite[4.2.1]{FreBen04})
at level $k$.

A $\affg$-module $M$ is said to be {\em of level} $k$
if $K$ acts as the  scalar $k$.
It is called {\em smooth}
if
 $(xt^n)m=0$ for 
any $x\in\affg$, $m\in M$
and a sufficiently large $n$.
A smooth $\affg$-module $M$ 
of level $k$
is naturally considered as a  continuous $\widetilde{U}_k(\affg)$-module.

Let 
$\Vg{k}$
be the universal affine vertex algebra associated with $\fing$
at level $k$
as in Introduction.
By definition, 
\begin{align*}
 \Vg{k}=U(\affg)\otimes_{U(\fing[t]\+\C K)}\C_k
\end{align*}
as a $\affg$-module,
where
 $\C_k$ is the one-dimensional
representation of 
$\fing[t]\+ \C K$ on which 
$\fing[t]$ acts trivially and $K$ acts as a  multiplication by $k$.
Note that by the PBW theorem
we have
$\Vg{k}\cong U(\fing[t\inv]t\inv)$ 
as  vectors spaces.
Define 
the linear map
\begin{align*}
\tilde Y: \Vg{k}\ra \widetilde{U}_k(\affg)[[z,z\inv],\quad
a\mapsto  \tilde Y(a,z)=\sum_{n\in\Z}a_{(n)}z^{-n-1} 
\end{align*}by
\begin{align*}
 \tilde Y((x_1t^{-n_1-1})\dots (x_r t^{-n_r-1})\1,z)
=\frac{1}{n_1!\dots n_r!}
:\partial_z^{n_1}x_1(z)\dots \partial_z^{n_r}x_r(z):,
\end{align*}
where $\1=1\otimes 1$,
$x(z)=\sum_{n\in \Z}(xt^n)z^{-n-1}$ for $x\in \fing$
and $:~:$ denotes 
the normally ordered product
(see \cite[3.1]{Kac98}).
The space $\Vg{k}$ is 
equipped with the  vertex algebra structure
whose
 state-field correspondence
$Y:\Vg{k}\ra  (\End \Vg{k})[[z,z\inv]]$
is given by
the composition of $\tilde{Y}$ with 
the action map $\UU_k(\affg)\ra \End \Vg{k}$,
see \cite{Kac98,FreBen04} for the details.
We have
\begin{align}
& [a_{(m)}, b_{(n)}]=\sum_{i\geq 0}\begin{pmatrix}
				   m\\ i
				  \end{pmatrix}(a_{(i)}b)_{(m+n-i)},
\label{eq:commu-relation}
\\
& (a_{(m)}b)_{(n)}=\sum_{i\geq 0}(-1)^i\begin{pmatrix}
					m\\i
				       \end{pmatrix}
(a_{(m-i)}b_{(n+i)}-(-1)^r b_{(m+n-i)}a_{(i)})
\end{align}
for $a,b\in \Vg{k}$,
$m,n\in \Z$,
in $\widetilde{U}_k(\affg)$
by \cite[Lemma 4.3.2]{FreBen04}.

For a 
 smooth $\affg$-module of level $k$,
let $Y^M:\Vg{k}\ra (\End M)[[z,z\inv]]$
be
the composition of $\tilde{Y}$ with 
the action map $\widetilde{U}_k(\affg)\ra \End M$.
This gives $M$ a 
$\Vg{k}$-module structure.
Conversely any $\Vg{k}$-module can be considered as a smooth
$\affg$-module of level $k$.
Hence a $\Vg{k}$-module is the same as a smooth $\affg$-module of level $k$.

We assume that the 
level $k$ is non-critical,
that is,
$k\ne -h\che$,
where $h\che$ is the dual Coxeter number of $\fing$.
The vertex algebra $\Vg{k}$ is conformal by the Sugawara construction;
let 
\begin{align*}
L(z)=\sum_{n\in \Z}L_nz^{-n-2}=\tilde Y(\omega,z),
\end{align*}
where $\omega=\frac{1}{2(k+h\che)}\sum_{i=1}^{\dim \fing}x_i t\inv x^i
t\inv \1$,
$\{x_i\}$ is a basis of $\fing$,
$\{x^i\}$ is the dual basis,
and
$\1=1\otimes 1\in \Vg{k}$.
We have
\begin{align*}
 &[L_m,L_n]=(m-n)L_{m+n}+\frac{m^3-m}{12}\delta_{n+m,0}\frac{k \dim
 \fing}{k+h\che},\\
&[L_m, x t^n]=- nxt^{n-m}\quad\text{for }x\in\fing,\ n\in \Z.
\end{align*}

For a smooth
$\affg$-module $M$ of level $k$
and $d\in \C$,
set
\begin{align*}
M_{d}=\{m\in M;(L_0-d)^r  m=0\ \text{ for }r\gg 0\}.
\end{align*}
Note that each
$M_{d}$ is a $\fing$-submodule of $M$.
We say
$M$ is graded if $M=\bigoplus_{d\in \C}M_d$;
positively graded if there exists
$d_{top}\in \C$ such that
$M_{d_{top}}\ne 0$
and 
$M=\bigoplus_{n\in \Z_{\geq 0}}M_{d_{top} +n}$.
If this is the case we often write
$M_{top}$ for $M_{d_{top}}$.

The
module
$\Vg{k}$
 is positively graded:
\begin{align*}
 \Vg{k}=\bigoplus_{\Delta=0}^{\infty}\Vg{k}_{\Delta}.
\end{align*}
For a homogeneous element $a\in \Vg{k}$,
the $L_0$-eigenvalue of $a$ is called the {\em conformal weight}
of $a$ and denoted by $\Delta_a$.

We denote by  $\mathscr{L}(\Vg{k})$  Borcherds' Lie algebra associated with $\Vg{k}$:
\begin{align*}
 \mathscr{L}(\Vg{k})=\Vg{k}\otimes \C[t,t^{-1}]/(L_{-1}\otimes1 +1\+ \frac{d}{dt})(\Vg{k}\otimes \C[t,t\inv]).
\end{align*}
The Lie bracket of $\mathscr{L}(\Vg{k})$ is given by
\begin{align*}
 [a_{\{m\}}, b_{\{n\}}]=\sum_{i\geq 0}\begin{pmatrix}
				   m\\ i
				  \end{pmatrix}(a_{(i)}b)_{\{m+n-i\}}
\end{align*}
for $a,b\in V$, $m,n\in \Z$,
where 
$a_{\{n\}}$ denotes the image of $a\otimes t^n$ in $\mathscr{L}(\Vg{k})$.
The Lie algebra
 $\mathscr{L}(\Vg{k})$ is graded:
\begin{align*}
 \mathscr{L}(\Vg{k})=\bigoplus_{\Delta}\mathscr{L}(\Vg{k})_{\Delta},
\end{align*}
where 
 $\deg a_{\{m\}}=m-\Delta_a+1$ 
for a homogeneous vector $a\in \Vg{k}$.

Note that
$(xt^{-1}\1)_{\{m\}}$, $x\in \fing$, $m\in \Z$,
generates the Kac-Moody algebra  $\affg$ at level $k$,
and hence, there is an adjoint action of $\affg$
on $\mathscr{L}(\Vg{k})$.

By \eqref{eq:commu-relation},
there is a natural Lie algebra homomorphism
\begin{align*}
 \mathscr{L}(\Vg{k})\ra  \UU_k(\affg),
\quad a_{\{m\}}\mapsto a_{(m)},
\end{align*}
which is known to be injective (\cite[4.2.7]{FreBen04}). 
Below we consider
$\mathscr{L}(\Vg{k})$ as a graded Lie subalgebra
of $\UU_k(\affg)$
and write $a_{(m)}$ for $a_{\{m\}}$.
Clearly,
$\mathscr{L}(\Vg{k})$
is a $\ad \affg$-submodule of 
$\UU_k(\affg)$.

For a $\Z$-graded vertex algebra $V$,
let $A(V)$ be Zhu's algebra of $V$.
By \cite[Theorem 3.1.1]{Zhu96},
one knows that  $A(\Vg{k})\isomap U(\fing)$
as algebras.
This isomorphism 
 may be constructed as follows:
Consider the decomposition
$U_k(\affg)=U(\fing)\+ (\fing[t\inv]t\inv
U_k(\affg)+U_k(\affg)\fing[t]t)$.
This induces the decomposition
\begin{align}
 \UU_k(\affg)=U(\fing)\+\overline{(\fing[t\inv]t\inv
U_k(\affg)+U_k(\affg)\fing[t]t)},
\end{align}
where,
for a subspace $M$
of $\UU_k(\affg)$,
 $\overline{M}$ denotes the 
  closure of 
$M$.
Let $\Phi:\UU_k(\affg)\ra U(\fing)$
be the projection with respect to the above decomposition.
\begin{Lem}\label{Lem:Zhu-iso-FZ}
 The correspondence $[a]\mapsto \Phi(o(a))$
gives the algebra isomorphism
$A(\Vg{k})\isomap U(\fing)$,
where
$o: \Vg{k}\ra \UU_k(\affg)$ is the linear map
defined by  $o(a)=a_{(\Delta_a-1)}$
for a homogeneous element $a$ of $V$.
\end{Lem}
\begin{proof}
 The fact that
the above map
 is an algebra homomorphism
follows from the formula
\begin{align}
 o(a)o(b)\equiv o(a * b) \pmod{\overline{(\fing[t\inv]t\inv
U_k(\affg)+U_k(\affg)\fing[t]t)}},
\label{eq:Zhu-alg-hom}
\end{align}
where $a * b=\sum_{i\geq 0}\begin{pmatrix}
			    \Delta_a\\i
			   \end{pmatrix}a_{(i-1)}b$
for a homogeneous element $a\in \Vg{k}$
(see \cite[Theorem 2.1.2]{Zhu96}).
The assertion follows by
recalling the proof of 
\cite[Theorem 3.1.1]{FreZhu92}.
\end{proof}

Let $\BGG_k$ be the full subcategory of
 $\affg$-modules of level $k$  consisting of objects $M$
 on which
(1) $L_0$ acts locally finitely,
(2) $\affn_+$ acts locally nilpotently,
(3) $\affh$ acts semisimply.
An object of $\BGG_k$ is obviously smooth.
Note that
$\Vg{k}$ is an object of $\BGG_k$.
Let
$L(\lam)$  be  the irreducible highest weight representation
of $\affg$ with highest weight $\lam\in \dual{\affh}$.
Clearly,
$L(\lam)$ is positively graded and 
\begin{align*}
L(\lam)_{top}\cong
\sL_{\slam}^{\fing}
\end{align*}as $\fing$-modules.
The category
 $\BGG_k$  is naturally regarded as a full subcategory of the
category of $\Vg{k}$-modules.

\smallskip

Let $N_k$ be the 
unique maximal  ideal of
 $\Vg{k}$.
Then
\begin{align*}
\Vg{k}/N_k\cong L(k\Lam_0)
\end{align*}
as $\affg$-modules,
where
 $\Lam_0\in \dual{\affh}$
is the $0$-th fundamental weight of $\affg$:
$\Lam_0(K)=1$, $\Lam_0(\finh)=0$.
The vertex algebra $L(k\Lam_0)$
is called the {\em (simple) affine vertex
algebra}
associated with $\fing$
at level $k$.
By definition, a
$\Vg{k}$-module $M$ is
a $\Vs{k}$-module if and only if 
$a_{(n)}$ 
annihilates 
$M$ for all  $a\in N_k$, $n\in \Z$.

Let $\mathscr{L}(N_k)$
 be the image of $N_k$ in $\mathscr{L}(\Vg{k})$.
Then $\mathscr{L}(N_k)$ is a graded ideal of 
$\mathscr{L}(\Vg{k})$:
$\mathscr{L}(N_k)=\bigoplus_{\Delta\in \Z}\mathscr{L}(N_k)_{\Delta}$,
where $\mathscr{L}(N_k)_{\Delta}=\mathscr{L}(N_k)\cap \mathscr{L}(\Vg{k})_{\Delta}$.
We have
$\mathscr{L}(L(k\Lam_0))\cong \mathscr{L}(\Vg{k})/\mathscr{L}(N_k)$.
Set
\begin{align*}
 I_k:=\Phi(\mathscr{L}(N_k)_0)\subset U(\fing).
\end{align*}
By \eqref{eq:Zhu-alg-hom},
it follows that $I_k$ is an two-sided ideal of $U(\fing)$.

The following follows immediately from the definition of
Zhu's algebra \cite{FreZhu92}.
\begin{Lem}\label{Lem:Zhu-iso-L}
 The isomorphism $A(\Vg{k})\isomap U(\fing)$ in Lemma
 \ref{Lem:Zhu-iso-FZ}
induces the isomorphism
$A(L(k\Lam_0))\isomap U(\fing)/I_k$.
\end{Lem}
By  Zhu's theorem \cite{Zhu96},
the correspondence 
\begin{align*}
M\mapsto M_{top}
\end{align*}
gives a bijection between the set
of isomorphism classes of simple positively graded
$L(k\Lam_0)$-modules and that of simple $A(L(k\Lam_0))$-modules.
Hence 
Lemma  \ref{lemma:property-of-characteristic-vareity}
and 
Lemma \ref{Lem:Zhu-iso-L}
give the following assertion.
\begin{Pro}\label{Pro:zhu-vs-joseph}
For a weight  $\lam\in \dual{\affh}$ of level $k$,
$ \Irr{\lam}$ is a module over $\Vs{k}$ if and only if
$ \bar \lam\in \zeroset(I_k)$.
(Recall that $\bar \lam$ denotes the restriction of $\lam$ to $\finh$.) 
\end{Pro}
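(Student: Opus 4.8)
The plan is to unwind the definitions and reduce the statement to the already-established relationship (\ref{eq:property-of-characteristic-vareity}) between the characteristic variety and annihilation of simple highest weight modules. The heart of the matter is translating ``$\Irr{\lam}$ is a $\Vs{k}$-module'' into a statement about the ideal $J_k$ acting on $\sIrr{\bar\lam}$.

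First I would recall that, as established in the excerpt, a $\Vg{k}$-module $M$ is a $\Vs{k}$-module if and only if the maximal ideal $N_k$ annihilates it, which in the language of the current algebra means $\mathscr{J}_k M=0$, and by the displayed equivalence this holds if and only if $\mathscr{J}_k^{\fin}M=0$. The point is that for an object of $\BGG_k$—in particular for $\Irr{\lam}$—the condition $\mathscr{J}_k^{\fin}M=0$ can be tested on the degree-zero part. More precisely, since $\Irr{\lam}$ is generated by its highest weight vector $v_\lam$ under $\affn_-$, and $\mathscr{J}_k^{\fin}$ is a two-sided ideal, the annihilation of all of $\Irr{\lam}$ is governed by the action of $(\mathscr{J}_k^{\fin})_0$ on the highest weight line. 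This is where the grading becomes essential: a homogeneous element of $\mathscr{J}_k^{\fin}$ of nonzero degree shifts the $L_0$-weight, so checking annihilation on the cyclic generator reduces to the degree-zero component together with the raising/lowering structure.

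Next I would use the identification $\Zhu(\Vs{k})\cong U(\fing)/J_k$ together with Zhu's theorem \cite{Zhu96}, which sets up a bijection between positively graded simple $\Vs{k}$-modules and simple $\Zhu(\Vs{k})$-modules. Under this correspondence the simple $\Vg{k}$-module $\Irr{\lam}$ corresponds to the simple $U(\fing)$-module with highest weight $\bar\lam$, namely $\sIrr{\bar\lam}$; the top degree piece of $\Irr{\lam}$ is precisely $\sIrr{\bar\lam}$ as a $\fing$-module. Thus $\Irr{\lam}$ descends to a $\Vs{k}$-module exactly when $\sIrr{\bar\lam}$ is a module over $U(\fing)/J_k$, i.e.\ when $J_k\,\sIrr{\bar\lam}=0$. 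Invoking now (\ref{eq:property-of-characteristic-vareity}) applied to the ideal $I=J_k$, the condition $J_k\,\sIrr{\bar\lam}=0$ is equivalent to $\bar\lam\in\zeroset(J_k)$, which is exactly the desired conclusion.

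The main obstacle I anticipate is the careful bookkeeping in the reduction from the full module to its top (degree-zero) piece: one must verify that annihilation by $J_k$ of the highest weight space really does force annihilation of $\Irr{\lam}$ by the whole ideal $\mathscr{J}_k^{\fin}$, and not merely by its degree-zero part. This is ultimately guaranteed by the fact that $\mathscr{J}_k^{\fin}$ is a two-sided $\ad\affg$-finite ideal compatible with the $\Z$-grading, so that its action on a highest weight module is determined by its restriction to the top component via the cyclicity of $v_\lam$; but spelling this out cleanly—essentially the content of Zhu's functor being faithful on highest weight modules—is the step that requires the most care. Everything else is a direct substitution of the definitions of $\zeroset(J_k)$ and of the $\Vs{k}$-module condition.
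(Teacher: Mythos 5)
Your argument is exactly the paper's: the paper proves this proposition in one line as ``a direct consequence of (\ref{eq:property-of-characteristic-vareity}) and Zhu's theorem \cite{Zhu96},'' and your proposal simply spells out that route---Zhu's correspondence identifies $L(\lam)$ being a $\Vs{k}$-module with $J_k\,\sIrr{\bar\lam}=0$ via $\Zhu(\Vs{k})\cong U(\fing)/J_k$, and (\ref{eq:property-of-characteristic-vareity}) converts this to $\bar\lam\in\zeroset(J_k)$. The degree-zero bookkeeping you flag as the delicate step is precisely what the citation of Zhu's theorem absorbs, so your extra care there is sound but not a departure from the paper.
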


\begin{Lem}\label{Lem:Joseph}
Let 
$\lam\in \dual{\affh}$
be a weight of level $k$
such that
$\bar \lam\in \zeroset(I_k)$.
Suppose that
$\bra \lam+\affrho, \alpha_i\che\ket \not \in \N$
for $\alpha_i\in \wh{\Pi}$.
Then $\overline{s_i\circ \lam}\in \zeroset(I_k)$.
\end{Lem}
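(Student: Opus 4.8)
The plan is to deduce the statement for the finite simple roots directly from the finite Duflo--Joseph Lemma~\ref{Lemma:Duflo}, and to treat the genuinely affine node $\alpha_0$ by transplanting the proof of that lemma into the completed algebra $\UU_k(\affg)^{\fin}$.

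First I would translate everything into the language of the projection $\wh{\Upsilon}$. By \eqref{eq:Jk affine point -of view} we have $\Upsilon(J_k)=\wh{\Upsilon}(\mathscr{J}_k^{\fin})$, and since $\wh{\Upsilon}$ annihilates every element of $\UU_k(\affg)$ of nonzero $\finh$-weight or nonzero degree, the set $\zeroset(J_k)$ is cut out inside $\finh^*$ by the functions $\wh{\Upsilon}(u)$, $u\in\mathscr{J}_k^{\fin}$. Concretely, for such a $u$ (of weight and degree zero) and the highest weight vector $v_\mu$ of $\Irr{\mu}$ one has $u\,v_\mu=\wh{\Upsilon}(u)(\bar\mu)\,v_\mu$, so by Proposition~\ref{Pro:zhu-vs-joseph} membership $\bar\mu\in\zeroset(J_k)$ is equivalent to the vanishing of all $\wh{\Upsilon}(u)(\bar\mu)$. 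Now for a finite simple root $\alpha_i\in\wh{\Pi}$ with $i\geq1$ one has $\bra\lam+\affrho,\alpha_i\che\ket=\bra\bar\lam+\rho,\alpha_i\che\ket$ and $\overline{s_i\circ\lam}=s_i\circ\bar\lam$, so the assertion is exactly Lemma~\ref{Lemma:Duflo} applied to the two-sided ideal $I=J_k\subset U(\fing)$. The essential case is therefore the affine node $\alpha_0=-\theta+\delta$, where a short computation gives $\overline{s_0\circ\lam}=s_\theta\circ\bar\lam+(k+h\che)\theta$; because of the shift by $(k+h\che)\theta$ this weight is not in the finite dot-orbit of $\bar\lam$, so Lemma~\ref{Lemma:Duflo} does not cover it.

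For $i=0$ I would rerun the proof of Lemma~\ref{Lemma:Duflo} inside $\UU_k(\affg)^{\fin}$. The key observation is that that proof is local with respect to the rank-one subalgebra $\mf{sl}_2=\bra e_0,\alpha_0\che,f_0\ket\subset\affg$ attached to $\alpha_0$: it uses only the two-sidedness of the ideal (here $\mathscr{J}_k^{\fin}\subset\UU_k(\affg)^{\fin}$), the projection $\wh{\Upsilon}$ together with the triangular decomposition \eqref{eq:affine-decomspotition1}, and the representation theory of this single $\mf{sl}_2$ acting on the $\alpha_0$-string through a highest weight vector. Starting from $u\in\mathscr{J}_k^{\fin}$ with $\wh{\Upsilon}(u)(\bar\lam)=0$, one uses two-sidedness to build from $u$ and suitable powers of $e_0$ and $f_0$ a new element of $\mathscr{J}_k^{\fin}$ whose value under $\wh{\Upsilon}$ at $\overline{s_0\circ\lam}$ equals $\wh{\Upsilon}(u)(\bar\lam)$ up to the rank-one transition coefficient. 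Since every element of $\mathscr{J}_k^{\fin}$ is $\ad\affg$-finite, the $\ad\mf{sl}_2$-string generated by $u$ is finite-dimensional, so this is an honest computation in finite-dimensional $\mf{sl}_2$-modules, and the transition coefficient is the usual product of factors of the form $\bra\lam+\affrho,\alpha_0\che\ket-j$. The hypothesis $\bra\lam+\affrho,\alpha_0\che\ket\notin\N$ is precisely what guarantees that this coefficient is a nonzero scalar, so the vanishing transports and $\overline{s_0\circ\lam}\in\zeroset(J_k)$.

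The hard part will be to make this transport rigorous in the degreewise completed topological algebra rather than in $U(\fing)$. I would need to verify that the auxiliary elements built from $u$ remain in the $\ad\affg$-finite subalgebra $\UU_k(\affg)^{\fin}$, and that the expansions used to commute powers of $f_0$ past $u$ and to divide by the transition coefficient converge degreewise, so that $\wh{\Upsilon}$ may be applied term by term; the $\Z$-grading is used first to reduce to degree-zero $u$, for which $u\,v_\mu=\wh{\Upsilon}(u)(\bar\mu)\,v_\mu$ holds exactly. Once one checks that the whole rank-one argument is confined to the $\ad\affg$-finite part, where it becomes a statement about finite-dimensional $\mf{sl}_2$-modules, these completion issues are routine and the proof closes.
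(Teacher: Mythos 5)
Your proposal is correct and is essentially the paper's proof: the finite nodes are dispatched by Lemma \ref{Lemma:Duflo}, and for $\alpha_0$ the paper likewise transplants Joseph's rank-one argument into the completed algebra, with the $\ad\affg$-finiteness of $\mathscr{J}_k^{\fin}$ playing exactly the role you assign it. The only organizational difference is that instead of transporting vanishing directly through $\wh{\Upsilon}$, the paper first projects $\mathscr{J}_k^{\fin}$ onto the completed Levi $\overline{U_k(\affl)}$ of the minimal parabolic at $\alpha_0$, where $\ad\affl$-finiteness shows the resulting two-sided ideal is generated by elements $e_0^k b_k$ with $b_k$ in the subalgebra generated by the Casimir of $\mf{sl}_2^{(0)}$ and $\affh_0^{\bot}$ --- precisely the structural fact that makes the citation of \cite[Lemma 2]{Jos77} immediate and that your sketch defers to the ``honest computation in finite-dimensional $\mf{sl}_2$-modules.''
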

 \begin{proof}
The case
$i=1,\dots, l$ is the statement of Lemma \ref{Lemma:Duflo}.
We shall prove the assertion for $i=0$
based on the argument in the proof of 
\cite[Lemma 2]{Jos77}.
Let 
 $\mf{sl}_2^{(0)}=\haru_{\C}\{e_0,f_0,\alpha_0\che\}\subset \affg$ be the copy of
  $\mf{sl}_2(\C)$,
where $e_0$ and $f_0$ are root vectors of root $\alpha_0$ and
  $-\alpha_0$,
respectively.
Set 
\begin{align*}
\affb^{(0)}=\mf{sl}_2^{(0)}+(\affh\+\affn_+)
=\affl\+ \affm,
\end{align*}
the minimal parabolic subalgebra of $\affg$,
where
$\affl$  is its Levi subalgebra
and  $\affm$
is  its nilradical.
We have
$\affl=\mf{sl}_2^{(0)}+\affh_0^{\bot}$
and
$\affm=\bigoplus\limits_{\alpha\in\widehat{\Delta}^{re}_+\atop \alpha\ne \alpha_0}\affg_{\alpha
}$,
where $\affh^{\bot}_0$
is
the orthogonal complement  
of $\C \alpha_0\che$ in $\affh$
and
$\affg_{\alpha}$ denotes the root space of $\affg$ of root $\alpha$.
Denote by 
$\affm_-$ the opposite subalgebra 
of $\affm$,
so  that
 $\affg=\affm_-\+\affl\+\affm_+$.
Then we have the decomposition
$U_k(\affg)=U_k(\affl)\+ (\affm_- U_k(\affg)+U_k(\affg)\affm)$,
where
$U_k(\affl)=U(\affl)/(K-k)U(\affl)$.
This induces the decomposition
\begin{align*}
 \UU_k(\affg)=U_k(\affl)\+
\overline{(\affm_-\UU_k(\affg)\+
\UU_k(\affg)\affm)},
\end{align*}
Let $\wh\Upsilon^{(0)}:
\UU_k(\affg)\ra U_k(\affl)
$ be the projection with respect to this
  decomposition.
Set
\begin{align*}
  I:=\wh\Upsilon^{(0)}(\mathscr{L}(N_k)_0),
\end{align*}
which is an $\ad \affl$-invariant subspace of $U_k(\affl)$.
Note that
we have 
\begin{align}
 \Upsilon(I_k)=\gamma^{(0)}(I),
\label{eq:connection-to-Joseph}
\end{align}
where
$\gamma^{(0)}:U(\affl)\ra U(\finh)$ is the projection
defined by the decomposition
$U_k(\affl)=U(\finh)\+ (f_0U_k(\affl)
+ U_k(\affl)e_0)$.

Since it is a direct sum of semisimple finite-dimensional representations
 $I$ is generated by 
$I^{e_{0}}:=\{u\in I; [e_0,u]=0\}$ as an $\ad \affl$-module,
and by Kostant's separation theorem $I^{e_{0}}$ is spanned by
the vectors of the form
\begin{align*}
 e_0^n b_n \quad\text{with }n\in \Z_{\geq 0},  \ b_n\in  Z_0,
\end{align*}
where 
$Z_0$ is the subalgebra of $U_k(\affl)$ generated by 
the quadratic Casimir element  $\Omega_0\in U(\mf{sl}_2^{(0)})$ 
and $\affh^{\bot}_0$.

Now suppose that
$e_0^n b_n\in I^e$,
$n\in \Z_{\geq 0}$,
$b_k\in Z_0$, annihilates $L(\lam)$.
Then $e_0^n f_0^n b_n$ also annihilates $L(\lam)$.
Since
\begin{align*}
 e_0^nf_0^n b_nv_{\lam}=
\lam(\alpha_0\che)(\lam(\alpha_0\che)-1)\dots (\lam(\alpha_0\che)-n+1)b_nv_{\lam},
\end{align*}
where  $v_{\lam}$ is the highest weight vector of $L(\lam)$,
the assumption implies that
$\lam$ is a zero of 
$\gamma^{(0)}(b_n)$,
and so is 
$s_0\circ \lam $.
It follows that
$I^{e_0}$ is spanned by the vectors
$e_0^n b_n$, $n\in \Z_{\geq 0}$, $b_n\in Z_0$,
such that
$\gamma^{(0)}(b_n)(s_0\circ \lam)=0$.
By
 \eqref{eq:connection-to-Joseph},
we conclude 
 that
$s_0\circ \lam$ is a zero of $\HC(I_k)$.
 \end{proof}
\begin{Lem}\label{Lem:dynkin-auto}
Let 
$\lam\in \dual{\affh}$
be a weight of level $k$ such that
$\bar \lam\in \zeroset(I_k)$.
Then
 $\overline{w\circ \lam}\in \zeroset(I_k)$
for any $w\in \widetilde{W}_+$.
\end{Lem}
  \begin{proof}
Let $w\in \widetilde{W}_+$.
Then $w\inv=t_{\bar \Lam_j}w_j$ for some $j\in J$,
see \eqref{eq:elements-of-eW-of-Dynkin-auto}.
Let $\tilde{w}_j$ be a Tits lifting of $w_j$ to the adjoint group
of $\fing$.
Also,
let
$\Delta(\bar \Lam_j,z)$
be 
 Li's delta operator
\cite{Li97}
corresponding to $\bar\Lam_j$.
One can 
obtain
a new simple
$L(k\Lam_0)$-module
from $L(\lam)$
by
twisting
the action 
$a\mapsto Y^{L(k\Lam_0)}(a,z)$
on $L(\lam)$
by $a\mapsto Y^{L(k\Lam_0)}(\Delta(\bar \Lam_j,z)
\tilde{w}_ja,z)$.
Since
the twisting
the action
by 
$\Delta(\bar \Lam_j,z)$ induces the 
automorphism of $\affg$
corresponding
to $t_{\bar \Lam_j}$
(see \cite[(3.15)--(3.17)]{Li97}),
one finds that the
resulting $L(k\Lam_0)$-module is 
in the category $\BGG_k$
and
isomorphic to $L(w\circ \lam)$ 
as $\affg$-modules.
This completes the proof.
  \end{proof}
\begin{Lem}\label{Lem:affin-Duflo-Joseph}
Let 
$\lam\in \dual{\affh}$
be a weight of level $k$ such that
$\bar \lam\in \zeroset(I_k)$,
and let $w\in \eW$.
Suppose that
$\bra \lam+\affrho, \alpha\che\ket \not \in \N$
for all $\alpha\in \wh{\Delta}^{re}_+
\cap w\inv (\wh{\Delta}^{re}_-)$.
Then $\overline{w\circ \lam}\in \zeroset(I_k)$.
\end{Lem}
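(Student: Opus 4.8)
The plan is to induct on the length $\ell(w)$ of $w$ in the extended Weyl group $\eW$, using the single simple reflection case already settled in Lemma \ref{Lem:Joseph} as the engine. Throughout I write $N(w)=\wh{\Delta}^{re}_+\cap w\inv(\wh{\Delta}^{re}_-)$ for the inversion set of $w$, and recall that $|N(w)|=\ell(w)$; thus the hypothesis of the lemma is precisely that $\bra\lam+\affrho,\alpha\che\ket\notin\N$ for every $\alpha\in N(w)$. The weight $\lam$ stays fixed throughout the induction; only $w$ shrinks.

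For the inductive step, suppose $\ell(w)\geq 1$ and choose a left descent, i.e.\ a simple reflection $s_i$ ($i\in\{0,1,\dots,l\}$) with $\ell(s_iw)<\ell(w)$, which exists since $\ell(w)\geq 1$. Writing $w=s_iw'$ with $w'=s_iw$, the product is reduced, so $w'\inv\alpha_i\in\wh{\Delta}^{re}_+$ and one has the standard disjoint decomposition
\begin{align*}
N(w)=N(w')\sqcup\{w'\inv\alpha_i\}.
\end{align*}
In particular $N(w')\subseteq N(w)$, so $w'$ again satisfies the integrality hypothesis and the induction hypothesis gives $\overline{w'\circ\lam}\in\zeroset(J_k)$. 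Put $\mu:=w'\circ\lam$, so that $\mu+\affrho=w'(\lam+\affrho)$. Using the $\eW$-invariance of the pairing,
\begin{align*}
\bra\mu+\affrho,\alpha_i\che\ket=\bra\lam+\affrho,(w'\inv\alpha_i)\che\ket,
\end{align*}
which lies outside $\N$ precisely because $w'\inv\alpha_i\in N(w)$. Hence $\mu$ meets the hypotheses of Lemma \ref{Lem:Joseph} for the reflection $s_i$, and that lemma yields $\overline{s_i\circ\mu}=\overline{w\circ\lam}\in\zeroset(J_k)$, completing the inductive step.

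The base case is $\ell(w)=0$, i.e.\ $w\in\eW_+$, where $N(w)=\emptyset$ and the integrality hypothesis is vacuous; what must be shown is that $\zeroset(J_k)$ is stable under the dot action of $\eW_+$ followed by restriction to $\finh^*$. Each $\tau\in\eW_+$ is a symmetry of the affine Dynkin diagram, permuting $\wh{\Pi}$ by a permutation $\pi$ and fixing $\affrho$; it lifts to an automorphism $\sigma_\tau$ of $\affg$ fixing $K$, hence to an automorphism of the vertex algebra $\Vg{k}$ fixing the vacuum. Such an automorphism preserves the maximal ideal $N_k$, hence the ideal $\mathscr{J}_k$ topologically generated by $L(N_k)$; since $\tau$ fixes $\affrho$ the dot action reduces to $\tau\circ\lam=\tau\lam$, and twisting by $\sigma_\tau$ sends $\Irr{\lam}$ to $\Irr{\tau\circ\lam}$. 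As $\mathscr{J}_k$ is $\sigma_\tau$-stable, $\mathscr{J}_k\,\Irr{\lam}=0$ forces $\mathscr{J}_k\,\Irr{\tau\circ\lam}=0$, so by Proposition \ref{Pro:zhu-vs-joseph}, $\bar\lam\in\zeroset(J_k)$ gives $\overline{\tau\circ\lam}\in\zeroset(J_k)$.

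The combinatorial identity for $N(w)$ and the dot-action computation are routine. The substantive point I expect to be the main obstacle is the base case: establishing rigorously that $\zeroset(J_k)$ is invariant under the length-zero part $\eW_+$. The delicate features there are that the lifts $\sigma_\tau$ involve the loop variable $t$ and need not preserve the conformal grading, so one must check that twisting by $\sigma_\tau$ does preserve the class of $\Vs{k}$-modules and does implement exactly the dot action $\lam\mapsto\tau\circ\lam$ on highest weights, rather than some energy-shifted variant.
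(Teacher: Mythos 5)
Your length induction within the affine Weyl group is correct and is exactly what the paper intends when it says the case $w\in\affW$ follows from Lemma \ref{Lem:Joseph}: the inversion-set bookkeeping $N(s_iw')=N(w')\sqcup\{w'{}\inv\alpha_i\}$ for a reduced product, together with $\bra w'\circ\lam+\affrho,\alpha_i\che\ket=\bra\lam+\affrho,(w'{}\inv\alpha_i)\che\ket$, is the standard and intended argument. The problem is your base case, and you correctly sensed where the danger lies but your argument there does not survive scrutiny. The claim that $\tau\in\eW_+$ ``lifts to an automorphism $\sigma_\tau$ of $\affg$ fixing $K$, hence to an automorphism of the vertex algebra $\Vg{k}$ fixing the vacuum'' is false for every nontrivial $\tau$. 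Writing $\tau=t_\mu\bar w$ with $\mu\in P\che$, the induced map of $\affg$ does not preserve $\fing[t]\+\C K$ (equivalently, it does not commute with $L_0$; it shifts the root space $\fing_\alpha t^n$ by an amount depending on $\bra\mu,\alpha\ket$), so it induces no endomorphism at all of the induced module $\Vg{k}=U(\affg)\*_{U(\fing[t]\+\C K)}\C_k$, let alone one fixing $\1$ and commuting with $T=L_{-1}$; vertex algebra automorphisms of $\Vg{k}$ come from automorphisms of $\fing$, and diagram automorphisms of the affine diagram moving $\alpha_0$ are not of that form. A concrete falsification: for $\wh{\mf{sl}}_2$ at positive integer level $k$, the nontrivial $\tau$ implements spectral flow and carries $L(k\Lam_0)$ to $L(k\Lam_1)$; if $\tau$ were realized by an automorphism of the vertex algebra $\Vs{k}$, the twist of the vacuum module would again be the vacuum module, which it is not.

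Consequently the assertion ``such an automorphism preserves the maximal ideal $N_k$, hence $\mathscr{J}_k$'' has no foundation, and with it the whole base case collapses — and this base case is the only non-routine content of the lemma beyond Lemma \ref{Lem:Joseph}. The correct mechanism, and the one the paper invokes at precisely this point, is Li's simple-current (spectral flow) construction from \cite[\S 3]{Li97}: for suitable $h$ one defines $M^{(h)}$ with $Y_{M^{(h)}}(a,z)=Y_M(\Delta(h,z)a,z)$, a module over the \emph{same} vertex algebra via a modified state-field map rather than an automorphism twist, and Li's theorem states that if $L(\lam)$ is a $\Vs{k}$-module then so is $L(\pi\circ\lam)$ for $\pi\in\eW_+$. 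Your proposal would be complete if the base case were replaced by this citation (or by a proof of Li's result); as written, it proves the lemma only for $w\in\affW$.
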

 \begin{proof}
By Lemma \ref{Lem:dynkin-auto}
we may assume that
$w\in \affW$.
We proceed
by induction on the length
$\ell(w)$ of $w$.
The case that  $\ell(w)=1$
has been proved
in Lemma \ref{Lem:Joseph}.
So let $\ell(w)>1$.
Write $w=s_{i}y$
with 
$i\in \{0,1,\dots,l\}$,
$y\in \affW$,
$\ell(w)=\ell(y)+1$.
Then
\begin{align*}
\wh{\Delta}^{re}_+
\cap w\inv (\wh{\Delta}^{re}_-)=
\{y\inv(\alpha_i)\}\sqcup 
(\wh{\Delta}^{re}_+
\cap y\inv (\wh{\Delta}^{re}_-)).
\end{align*}
Hence by the induction hypothesis
we have $\overline{y\circ \lam}\in \zeroset(I_k)$.
Since
$\bra y\circ \lam+\wh{\rho},\alpha_i\che\ket=\bra
  y(\lam+\widehat{p}),\alpha_i\che)
=\bra \lam+\wh{\rho},y\inv(\alpha_i\che)\ket\not\in \N$,
 Lemma \ref{Lem:Joseph}
gives that
$\overline{w\circ \lam}\in \zeroset(I_k)$
as 
required.
 \end{proof}
\section{Kac-Wakimoto Admissible representations}
\label{section:Kac-Wakimoto Admissible representations}
For $\lam\in \dual{\affh}$,
let $\wh\Delta(\lam)$
and $\affW(\lam)$ be its integral root system
and its integral Weyl group,
respectively:
 \begin{align*}
& \wh\Delta(\lam)
=\{\alpha\in \wh{\Delta}^{re}; \bra
 \lam+\affrho,\alpha\che\ket
\in \Z\},
&\affW(\lam)=\bra s_{\alpha}; \alpha\in\wh \Delta(\lam) \ket.
\end{align*}
Let
$\wh\Delta(\lam)_+=\wh{\Delta}(\lam)\cap \wh{\Delta}^{re}_+$,
the set of positive roots of 
$\wh{\Delta}(\lam)$
and 
$\Pi(\lam)\subset \wh{\Delta}(\lam)_+$, the set of simple roots.

A weight $\lam\in \dual{\affh}$ is called {\em admissible} if
\begin{enumerate}
 \item $\lam$ is regular dominant, that is,
$\bra \lam+\affrho,\alpha\che\ket \not \in \{0,-1,-2,\dots\}$,
 \item $\Q\wh\Delta(\lam)=\Q \wh{\Delta}^{re}$.
\end{enumerate}
An  {\em admissible number} (for $\affg$) is 
a complex number $k$ such that
$k\Lam_0$   is admissible.
\begin{Pro}[\cite{KacWak89,KacWak08}]
\label{Pro:admissible number}
A complex  number  $k$ is admissible if and only if 
 \begin{align*}
  k+h\che=\frac{p}{q}
 \quad \text{with }p,q\in \N,\ (p,q)=1,\ p\geq 
\begin{cases}
h\che&\text{if }(r\che, q)=1\\
h&\text{if }(r\che,q)=r\che,
\end{cases}
 \end{align*}
where 
$h$  is
 the the Coxeter number of $\fing$
and $r\che$ is the lacing number of $\fing$,
that is, the maximal number of the edges in the Dynkin diagram of $\fing$.
If this is the case
we have
 $\wh{\Pi}(k\Lam_0)=\{\dot{\alpha_0},\alpha_1,\alpha_2,\dots,\alpha_l\}$,
where 
\begin{align*}
  \dot{\alpha_0}= 
\begin{cases}
-\theta+q\delta&\text{if }(r\che,q)=1
\\
 -\theta_s+\frac{q}{r\che}\delta&\text{if }(r\che ,q)=r\che.
\end{cases}
\end{align*}
\end{Pro}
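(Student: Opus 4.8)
The plan is to reduce both the numerical criterion and the description of $\wh{\Pi}(k\Lam_0)$ to a single evaluation of the pairing $\bra k\Lam_0 + \affrho, \beta\che\ket$ as $\beta$ ranges over the real roots $\wh{\Delta}^{re}$. First I would record the standard facts that $\affrho = h\che\Lam_0 + \rho$, that every real root has the shape $\beta = \alpha + n\delta$ with $\alpha\in\Delta$ and $n\in\Z$, and that $(\beta|\beta) = (\alpha|\alpha)$. With $k+h\che = p/q$ and the normalization $(\theta|\theta)=2$, a short computation using $(\Lam_0|\delta)=1$ and $(\Lam_0|\alpha)=0$ for finite $\alpha$ gives
\begin{align*}
\bra k\Lam_0 + \affrho, \beta\che\ket = \frac{p}{q}\,\frac{2n}{(\alpha|\alpha)} + \bra\rho,\alpha\che\ket,
\qquad
\frac{2n}{(\alpha|\alpha)} =
\begin{cases}
n & \text{if } \alpha \text{ is long},\\
n r\che & \text{if } \alpha \text{ is short}.
\end{cases}
\end{align*}
Since $\bra\rho,\alpha\che\ket\in\Z$, the membership $\beta\in\wh{\Delta}(k\Lam_0)$ is governed entirely by the first summand.

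Reading off $\wh{\Delta}(k\Lam_0)$ then splits into exactly the two listed cases, because $r\che\in\{1,2,3\}$ forces $(r\che,q)\in\{1,r\che\}$. If $(r\che,q)=1$, integrality amounts to $q\mid n$ for every $\alpha$, so $\wh{\Delta}(k\Lam_0)=\{\alpha+n q\delta\}$ is a rescaled copy of $\wh{\Delta}^{re}$ with affine simple root $\dot{\alpha_0} = -\theta + q\delta$; if $(r\che,q)=r\che$, integrality reads $q\mid n$ for long $\alpha$ and $\tfrac{q}{r\che}\mid n$ for short $\alpha$, giving the twisted-type system with affine simple root $\dot{\alpha_0} = -\theta_s + \tfrac{q}{r\che}\delta$. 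In either case, taking $n=0$ shows all of $\Delta$ lies in $\wh{\Delta}(k\Lam_0)$, whence $\Q\wh{\Delta}(k\Lam_0) = \Q\wh{\Delta}^{re}$ and condition (2) of admissibility holds automatically; the finite simple roots together with $\dot{\alpha_0}$ form the asserted base $\wh{\Pi}(k\Lam_0)$.

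It remains to analyze condition (1), which (all relevant pairings being integers) demands $\bra k\Lam_0+\affrho,\beta\che\ket \geq 1$ for every $\beta\in\wh{\Delta}(k\Lam_0)_+$. For the finite positive roots ($n=0$) this reads $\bra\rho,\alpha\che\ket\geq 1$ and holds trivially. Among the roots with positive $\delta$-shift the binding inequality is the one attached to $\dot{\alpha_0}$: using $h\che = \bra\rho,\theta\che\ket+1$ and $h = \bra\rho,\theta_s\che\ket+1$, I get $\bra k\Lam_0+\affrho,\dot{\alpha_0}\che\ket = p - h\che + 1$ in the first case and $p - h + 1$ in the second, so (1) is equivalent to $p\geq h\che$, resp. $p\geq h$, exactly as claimed. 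The one point that needs care is confirming that $\dot{\alpha_0}$ really is the extremal constraint: I would verify that the long affine node dominates when $(r\che,q)=1$ while the short affine node dominates when $(r\che,q)=r\che$, and that the remaining positive roots yield weaker inequalities. This is where the length data enters — the short-root pairings carry the extra factor $r\che$ and are controlled through the elementary inequalities $h\le r\che h\che$ and $h\che\le h$ — and it is the only genuinely case-sensitive step; everything else is the uniform computation above.
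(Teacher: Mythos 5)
The paper itself gives no proof of this Proposition --- it is quoted from \cite{KacWak89,KacWak08} --- so the relevant comparison is with the standard Kac--Wakimoto computation, and your proposal is essentially a correct reconstruction of it. The reduction of everything to the single evaluation $\bra k\Lam_0+\affrho,\beta\che\ket=\frac{p}{q}\cdot\frac{2n}{(\alpha|\alpha)}+\bra\rho,\alpha\che\ket$ for $\beta=\alpha+n\delta$, the case split according to $(r\che,q)\in\{1,r\che\}$, the resulting description of $\wh{\Delta}(k\Lam_0)$ and of $\dot{\alpha_0}$, and the identification of the binding dominance constraint are all correct. In particular the identities you use, $\bra\rho,\theta\che\ket=h\che-1$ and $\bra\rho,\theta_s\che\ket=h-1$ (the latter because $\theta_s\che$ is the highest root of the dual root system and the Coxeter number is self-dual), check out, as do the inequalities $h\che\le h$ and $h\le r\che h\che$ needed to see that $\dot{\alpha_0}$ is the extremal node in each case; in the twisted case both long and short roots with positive $\delta$-shift contribute $pm+\bra\rho,\alpha\che\ket$, so the minimum is attained at $-\theta_s+\frac{q}{r\che}\delta$, as you say.

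Two elisions are worth repairing. First, and more substantively for the ``if and only if'': you begin by \emph{writing} $k+h\che=p/q$, so as it stands you have not ruled out the remaining levels in the ``only if'' direction. This takes one line in each case: if $k+h\che\notin\Q$ then $\frac{2n}{(\alpha|\alpha)}(k+h\che)+\bra\rho,\alpha\che\ket\in\Z$ forces $n=0$, so $\wh{\Delta}(k\Lam_0)=\Delta$ and $\delta\notin\Q\wh{\Delta}(k\Lam_0)$, violating condition (2); if $k+h\che=-p/q<0$ the positive real root $-\theta+q\delta$ pairs to the non-positive integer $-p-h\che+1$, and at the critical level $k+h\che=0$ the roots $-\alpha+\delta$, $\alpha\in\Delta_+$, pair to $-\bra\rho,\alpha\che\ket\le -1$, violating condition (1) either way. (Note also that your remark ``taking $n=0$ shows $\Q\wh{\Delta}(k\Lam_0)=\Q\wh{\Delta}^{re}$'' needs $\dot{\alpha_0}$ as well, since $\Delta$ alone does not span $\delta$; this is harmless given your explicit description of the integral root system.) Second, and cosmetically: you assert rather than verify that $\Pi\cup\{\dot{\alpha_0}\}$ is a base of $\wh{\Delta}(k\Lam_0)_+$, especially in the twisted case $(r\che,q)=r\che$; this is standard --- the system is the real root system of the associated twisted affine root system with $\delta$ replaced by $\frac{q}{r\che}\delta$ --- but a sentence of justification would close the argument completely.
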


For an admissible number $k$
let $Pr^k$ be the set of admissible weights
$\lam$ such that $\wh{\Delta}(\lam)\cong \wh{\Delta}(k\Lam_0)$
as root systems.
Set
\begin{align*}
 Pr_{\Z}^k=Pr^k\cap \affP,
\end{align*}
where
\begin{align}
 \affP=\{\lam\in \dual{\affh};
\lam(K)=k,\ \bra \lam,\alpha_i\che\ket \in \Z
\ \text{for all }i=1,\dots,l \}.
\label{eq:affP}
\end{align}
Then
\begin{align}
 Pr_{\Z}^k=
\begin{cases}
 \{\lam\in \affP;
\bra \lam,\alpha_i\che \ket \geq 0
\ \text{for $i=1,\dots, l$,\ }\bra \lam,\theta\ket\leq
 p-h\che\}&\text{if }
(r\che
 ,q)=1,\\
\{\lam\in \affP; \bra \lam,\alpha_i\che \ket \geq 0
\ \text{for $i=1,\dots, l$,\ }\bra \lam,\theta_s\che \ket \leq
 p-h\}&\text{if }
(r\che ,q)=r\che.\end{cases}
\label{eq:Pr+}
\end{align}
We have  \cite{KacWak89}
\begin{align}
 Pr^k=\bigcup_{y\in \eW
\atop y(\wh{\Delta}(k\Lam_0)_+)\subset 
\wh{\Delta}^{re}_+}Pr_y^k,
\quad Pr_y^k:=y\circ Pr_{\Z}^k.
\end{align}

Note that
\begin{align*}
 Pr_{\Z}^k\cong \begin{cases}
	      \widehat{P}_{+}^{p-h\che}&\text{if }(r\che,q)=1,\\
{}^L\widehat{P}_{+}^{\vee,p-h}&\text{if }(r\che,q)=r\che,
	     \end{cases}
\end{align*}
where $\widehat{P}_{+}^m$
is the set of  level $m$  integral dominant weights of $\affg$,
and ${}^L\widehat{P}_{+}^{\vee,m}$ is the set of level $m$ integral dominant
coweights
of the affine Kac-Moody  algebra $\widehat{{}^L\fing}$
associated with the Langlands dual Lie algebra ${}^L\fing$.
Note  also that for 
$\lam\in Pr_{\Z}^k$
we have \begin{align}\label{eq:integral-roots}
 \widehat{\Delta}(\lam)
=\begin{cases}
			 \{\alpha+nq\delta; \alpha\in \Delta,\ n\in
			 \Z\}&\text{if }(q,r\che)=1,\\
\{\alpha+nq\delta;\alpha\in \Delta_{long}\}\sqcup
			 \{\alpha+\frac{nq}{r\che}\delta; \alpha\in\
			 \Delta_{short}, n\in \Z\}
&\text{if }(q,r\che)=r\che,
			\end{cases}
\end{align}
where $\Delta_{long}$ (resp.\ $\Delta_{short}$) is  the sets of
long roots
(resp.\ short roots) of $\fing$.
It follows that
\begin{align*}
 \widehat{W}(\lam)=\begin{cases}
					 W\ltimes q Q\che&\text{if
					 }(q,r\che)=1,\\
W\ltimes q Q&\text{if }(q,r\che)=r\che
					\end{cases}
\end{align*}
for $\lam\in Pr_{\Z}^k$.
In particular
\begin{align*}
 \widehat{W}(\lam)\cong \begin{cases}
					 \widehat{W}&\text{if
					 }(q,r\che)=1,\\
{}^L \widehat{W}&\text{if }(q,r\che)=r\che,
					\end{cases}
\end{align*}
for $\lam\in Pr^k$,
where ${}^L\widehat{W}$ is the Weyl group of $\widehat{{}^L\fing}$.

 \begin{Pro}[{\cite[1.5]{FKW92}}]\label{Pro:KW-nice}
Let $k$ be an admissible number for $\affg$ with denominator
$q$ 
as in Proposition \ref{Pro:admissible number}.
Suppose that  $(q,r\che)=1$,
and let
$\eW_+$ acts on
$Pr_{\Z}^k$ by 
$t_{\bar \Lam_j}w_j\mapsto t_{q\bar \Lam_j}w_j$,
 on $P\che/qQ\che$ be the identification 
$\eW_+=P\che/Q\che=qP\che/q Q\che$,
and on 
$Pr_{\Z}^k\times (P\che/q Q\che)$ diagonally.
We have  a bijection
\begin{align*}
 \begin{array}{ccc}
( Pr_{\Z}^k\times (P\che/q Q\che))
/\eW_+& \isomap & Pr^k,\\
{[(\lam,\mu)]}& \mapsto & t_{-\mu}w\circ \lam,
 \end{array}
\end{align*}
where $w$ is the  element of $W\ltimes q Q\che$ such that
$t_{-\mu}w(\widehat{\Delta}(k\Lam_0)_+)\subset 
\widehat{\Delta}^{re}_+$,
which exists uniquely.
In particular 
\begin{align*}
 |Pr^k|=|P\che/qP\che||\widehat{P}^{p-h\che}_{+}|=q^l |\widehat{P}^{p-h\che}_{+}|.
\end{align*}
 \end{Pro}

Let
${}^L\eW_+$
denote the 
group $\eW_+$ for $\widehat{{}^L\fing}$,
that is,
 the subgroup of 
the extended Weyl group
${}^L\eW$ of $\widehat{{}^L\fing}$
consisting of elements of length zero.
It acts on $Pr_{\Z}^k$ by the identification
$Pr_{\Z}^k\cong {}^L\widehat{P}_{+}^{\vee, p-h}$
if $r\che$ divides the denominator of $k$.
It acts also on the set
$P\che/q Q$ by the identification
${}^L\eW_+=P/Q=q P/q Q$,
and hence on
$Pr_{\Z}^k\times (P\che /q Q)$ diagonally.
The following assertion can be proved in the same manner as
Proposition \ref{Pro:KW-nice}.

\begin{Pro}\label{Pro:better-description-of-admissible-weights}
Let $k$ be an admissible number for $\affg$ with denominator
$q$ 
as in Proposition \ref{Pro:admissible number}.
Suppose that  $(q,r\che)=r\che$.
We have a bijection
\begin{align*}
 \begin{array}{ccc}
( Pr_{\Z}^k\times (P\che/q Q))
/{}^L\eW_+& \isomap & Pr^k,\\
{[(\lam,\mu)]}& \mapsto & t_{-\mu}w\circ \lam,
 \end{array}
\end{align*}
where
$w$ is the  element of $W\ltimes q Q$ such that
$t_{-\mu}w(\widehat{\Delta}(k\Lam_0)_+)\subset 
\widehat{\Delta}^{re}_+$,
which exists uniquely.
In particular
\begin{align*}
 |Pr^k|=
|P\che/q P||{}^L \widehat{P}_{+}^{\vee,p-h}|
=\frac{q^l|{}^L \widehat{P}_{+}^{\vee,p-h}|}
{(r\che)^{|\text{short simple roots of $\fing$}|}}.
\end{align*}
\end{Pro}

Now recall the following important result \cite{MalFre99} 
(see also \cite{FreMal97}).
\begin{Th}[{\cite{MalFre99}}]\label{Th:Frenkel-Malikov}
 Let $k$ be an admissible number and $\lam\in Pr_{\Z}^k$.
Then $L(\lam)$ is a module over $L(k\Lam_0)$.
\end{Th}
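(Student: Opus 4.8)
The plan is to reduce the assertion to the vanishing of a single field and then to establish that vanishing from the admissible structure of $\lam$. First I would record the vertex‑algebra criterion: since a $\Vg{k}$‑module is the same as a smooth $\affg$‑module of level $k$, the module $L(\lam)$ is a module over the simple quotient $\Vs{k}=\Vg{k}/N_k$ exactly when every field $Y(a,z)$ with $a\in N_k$ acts by zero. Because $N_k$ is generated as a vertex‑algebra ideal by one $\affg$‑singular vector $\chi$, and every $Y(a,z)$ with $a\in N_k$ is built from $Y(\chi,z)$ and the fields of $\Vg{k}$ through the $n$‑th product operations of the current algebra $\UU_k(\affg)$, it suffices to prove
\begin{align*}
 Y(\chi,z)L(\lam)=0.
\end{align*}
The fact that $N_k$ is generated by a single such $\chi$ is where admissibility of $k\Lam_0$ enters: by the Kac–Wakimoto description the maximal submodule of the vacuum module $\Vg{k}$ is generated by the singular vectors attached to the simple reflections of the integral Weyl group $\affW(k\Lam_0)$, and all of these except the one for the affine simple root $\dot\alpha_0$ of Proposition \ref{Pro:admissible number} have already been divided out in passing from the full Verma module to $\Vg{k}$.

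Next I would make $\chi$ explicit. Since $\dot\alpha_0\in\wh\Delta(k\Lam_0)$ is an integral root, $m:=\bra k\Lam_0+\affrho,\dot\alpha_0\che\ket$ is a positive integer, and $\chi$ has $\affg$‑weight $\dot s_0\circ(k\Lam_0)$ with $\dot s_0=s_{\dot\alpha_0}$; formally $\chi=f_{\dot\alpha_0}^{m}\1$. As $\dot\alpha_0$ is not a genuine simple root of $\affg$, I would rewrite this through the Malikov–Feigin–Fuks formula as a product of fractional powers of the true Chevalley generators $f_0,f_1,\dots,f_l$, a product which, though its individual factors are fractional, represents a well‑defined vector of $\Vg{k}$ whose modes are governed entirely by the combinatorics of the integral root system $\wh\Delta(k\Lam_0)$.

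Then comes the verification of the vanishing, which is the heart of the matter. For $\lam\in Pr_k^+$ the weight $\bar\lam$ is dominant integral for $\fing$, so the top of $L(\lam)$ is the finite‑dimensional module $\sIrr{\bar\lam}$ and $L(\lam)$ is $\fing$‑integrable; moreover the hypothesis $\wh\Delta(\lam)\cong\wh\Delta(k\Lam_0)$ forces the integral $\mf{sl}_2$‑triples entering the Malikov–Feigin–Fuks reductions to act on $L(\lam)$ with the same spectral data they carry on $\Vs{k}$. Running the same sequence of reductions on the highest weight vector $v_{\lam}$, the $\fing$‑integrability makes each fractional exponent act through an honest finite‑dimensional representation, and the resulting vector is forced into the maximal submodule of the relevant highest weight module, hence vanishes in the irreducible quotient $L(\lam)$; since $L(\lam)$ is generated by $v_{\lam}$ and the modes $\chi_{(n)}$ intertwine the $\affg$‑action, $\chi_{(n)}v_{\lam}=0$ for all $n$ propagates to $Y(\chi,z)L(\lam)=0$. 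Equivalently, one may phrase the computation through Zhu's theorem: by Proposition \ref{Pro:zhu-vs-joseph} it is enough to show $\bar\lam\in\zeroset(J_k)$, i.e.\ that the two‑sided ideal $J_k$ generated by the Zhu image of $\chi$ annihilates $\sIrr{\bar\lam}$.

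The main obstacle is precisely this last vanishing. The singular vector is only defined through the fractional‑power expression, so proving that all of its Fourier modes kill $L(\lam)$ requires tracking how the fractional exponents truncate against the dominant‑integral weight $\bar\lam$ and how the bound $\bra\lam,\theta\ket\le p-h\che$ (respectively $\bra\lam,\theta_s\che\ket\le p-h$) of \eqref{eq:Pr+} keeps the reductions inside the integrable range. It is here that the coprimality $(p,q)=1$ and the precise shape of $\wh\Pi(k\Lam_0)$ in Proposition \ref{Pro:admissible number} are used in an essential way; without the matching of integral root systems the reductions would leave the integrable regime and the argument would break down. Once the dominant case $Pr_k^+$ is settled, the remaining weights of $Pr_k$ are reached by the extended affine Weyl group $\eW$ and are handled separately through Lemma \ref{Lem:affin-Duflo-Joseph}.
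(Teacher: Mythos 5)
The first thing to say is that the paper contains no proof of Theorem \ref{Th:Frenkel-Malikov}: it is imported as a black box from \cite{MalFre99} and fed into the proof of the ``if'' part of Main Theorem via Proposition \ref{Pro:zhu-vs-joseph} and Lemma \ref{Lem:affin-Duflo-Joseph}. So your attempt has to be measured against the original Frenkel--Malikov argument, whose skeleton you reproduce correctly: reduce to the vanishing of the fields attached to a generator of $N_k$; identify that generator as the singular vector $\chi$ of weight $s_{\dot{\alpha}_0}\circ k\Lam_0$, using the Kac--Wakimoto structure theory of Verma modules with admissible highest weight (the singular vectors at the finite simple roots already die in $\Vg{k}$); realize $\chi$ by the Malikov--Feigin--Fuks complex-power monomial; and test $L(\lam)$ through Zhu's algebra, i.e.\ show $\bar\lam\in\zeroset(J_k)$. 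One justification along the way is wrong as stated: the modes $\chi_{(n)}$ do \emph{not} ``intertwine the $\affg$-action''. The correct mechanism is that, $N_k$ being a vertex algebra ideal, the span of all modes $a_{(n)}$ with $a\in N_k$ is an ideal of the Borcherds Lie algebra $L(\Vg{k})$, so that $\{m\in L(\lam);\, a_{(n)}m=0 \text{ for all } a\in N_k,\ n\in\Z\}$ is a $\affg$-submodule of $L(\lam)$; this is what lets $\chi_{(n)}v_{\lam}=0$ propagate to all of $L(\lam)$. The conclusion survives, but your stated reason does not.

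The genuine gap is at the step you yourself call ``the heart of the matter''. Your argument there --- integrability makes the fractional exponents act through finite-dimensional representations, and ``the resulting vector is forced into the maximal submodule of the relevant highest weight module, hence vanishes in the irreducible quotient'' --- is circular: $L(\lam)$ is simple, there is no maximal submodule left to quotient by, and the assertion that the vectors $\chi_{(n)}v_{\lam}$ vanish (equivalently, that the Zhu image $[\chi]\in U(\fing)=\Zhu(\Vg{k})$ annihilates $\sIrr{\bar\lam}$, i.e.\ $\bar\lam\in\zeroset(J_k)$) is precisely the content of the theorem. Nothing in your sketch shows \emph{why} the bound $\bra \lam,\theta\ket\le p-h\che$ (resp.\ $\bra \lam,\theta_s\che\ket\le p-h$) of \eqref{eq:Pr+} forces the projected MFF monomial to evaluate to zero on $\sIrr{\bar\lam}$; your closing paragraph describes what would need to be tracked rather than tracking it, and that manipulation of the complex-power factors is exactly where the work in \cite{MalFre99} lies. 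Two further points you pass over: the generation of $N_k$ by the single vector $\chi$ is a nontrivial input (it needs the fact that for an admissible, hence regular dominant, weight the maximal submodule of the Verma module is generated by the singular vectors at the simple integral roots), and in the coprincipal case $(r\che,q)=r\che$, where $\dot{\alpha}_0=-\theta_s+\frac{q}{r\che}\delta$, the MFF realization of $\chi$ requires separate care that your uniform phrasing hides. In short: correct strategy and correct identification of the inputs, but the decisive vanishing is announced, not proved.
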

The following assertion follows immediately
from
Proposition \ref{Pro:zhu-vs-joseph}
and Theorem \ref{Th:Frenkel-Malikov}.
 \begin{Co}\label{Co:FM}
We have $\bar \lam\in \zeroset(I_k)$
for  $\lam\in Pr_{\Z}^k$.
 \end{Co}
\begin{proof}[Proof of the ``if part'' of Main Theorem]
Let $\mu\in Pr_{y}^k$.
Then there exist
$\lam\in Pr_{\Z}^k$
and $w\in \eW$
such that
$\mu=y\circ \lam$
and 
$y(\wh{\Delta}(\lam)_+)\subset \wh{\Delta}^{re}_+$.
(Note that
$\wh\Delta(\lam)_+=\Delta(k\Lam_0)_+$.)
The last condition is equivalent to that
if $\alpha\in \wh{\Delta}^{re}_+$
such that $\bra \lam+\rho,\alpha\che\ket\in \Z$
then $y(\alpha)\in \wh{\Delta}^{re}_+$.
Or equivalently,
$\bra \lam+\rho,\alpha\che\ket  \not \in \Z$ for all $\alpha\in 
\wh{\Delta}^{re}_+\cap y\inv (\wh{\Delta}^{re}_-)$.
Since
$\bar \lam$ belongs to $ \zeroset(I_k)$ by 
Theorem \ref{Th:Frenkel-Malikov},
so does $\bar \mu$ by
Lemma \ref{Lem:affin-Duflo-Joseph}.
By Proposition \ref{Pro:zhu-vs-joseph},
this completes the proof of the ``if part'' of Main Theorem.
\end{proof}

\section{Semi-infinite restriction functors}
\label{section:Semi-infinite restriction functors}
In this section we prove the ``only if'' part of Main Theorem by
reducing to the $\mf{sl}_2$-cases.
(Recall that
statement of Main Theorem has been proved for $\fing=\mf{sl}_2$ 
by  Adamovi\'c and Milas \cite{AdaMil95}.)

Let $k$ be an admissible number for $\affg$,
so that
\begin{align*}
 k+h\che=\frac{p}{q},\ p,q\in \N,\ (p,q)=1,\
p\geq\begin{cases}
      h\che&\text{if }(r\che,q)=1,\\ h&\text{if }(r\che,q)=r\che.
     \end{cases}
\end{align*}

Let
$\{e_i,h_i,f_i;
 i=1,\dots,l\}$ 
be  a set of Chevalley generators of $\fing$.
For $i=1,\dots,l$,
let 
$\finpi$ be 
the  minimal  parabolic subalgebra 
$\C f_i\+ \finh\+ \finn$ of $\fing$,
$\finli$ 
its Levi subalgebra,
$\finmi$ its nil-radical.
We have
\begin{align*}
\finpi
=\finli\+\finmi,\quad
\finli=\mf{sl}_2^{(i)}
\+\finh_i^\bot,
\end{align*}
where
$\mf{sl}_2^{(i)}$ is the copy of $\mf{sl}_2$ spanned by 
$e_i,h_i$ and $f_i$,
and $\finh_i^\bot$ is the orthogonal component of $\C h_i$ in $\finh$.
We have
$\finmi=\bigoplus_{\alpha\in \Delta(\finmi)}\C x_{\alpha}$,
where
$\Delta(\finmi)=\Delta_+\backslash \{\alpha_i\}$
and $x_{\alpha}$ is a root vector of $\fing$ of root $\alpha$.

Set
\begin{align*}
 &L\finmi=\finmi[t,t\inv]\subset \affg.
\end{align*}
We shall consider 
the semi-infinite $L\finmi$-cohomology
$H^{\semiinf+\bullet}(L\finmi, M)$
with coefficient in $M\in \BGG_k$.
This is defined by 
Feigin's standard complex  $(C^{\bullet}(L\finmi,M)
,d)
$ (\cite{Feu84});
$H^{\semiinf+\bullet}(L\finmi, M)=
H^i(C^{\bullet}(L\finmi,M),d)$,
$C^{\bullet}(L\finmi,M)=M\* \Lamsemi{\bullet}\finmi$,
where
$\Lamsemi{\bullet}\finmi$ is a vertex (super)algebra
generated by the odd field
$\psi_\alpha(z)=\sum_{n\in Z}\psi_{\alpha,n}z^{-n-1}$,
$\psi_{\alpha}^*(z)
=\sum_{n\in \Z}\psi_{\alpha,n}^* z^{-n}$,
$\alpha\in \Delta(\finmi)$,
satisfying the OPEs
\begin{align*}
 \psi_{\alpha}(z)\psi_{\beta}^*(w)\sim
 \frac{\delta_{\alpha,\beta}}{z-w},\quad 
 \psi_{\alpha}(z)\psi_{\beta}(w)\sim 
 \psi_{\alpha}^*(z)\psi_{\beta}^*(w)\sim 0.
\end{align*}
Differential $d$ is the coefficient of $z^{-1}$  of the field
\begin{align*}
 d(z)=\sum_{\alpha\in\Delta(\finmi)}x_{\alpha}(z)
 \psi_{\alpha}^*(z)-
 \frac{1}{2}\sum_{\alpha,\beta,\gamma
\in \Delta(\finmi)}c_{\alpha,\beta}^\gamma
:\psi_{\alpha}^*(z)\psi_{\beta}^*(z)\psi_{\gamma}(z):,
\end{align*}
where $c_{\alpha,\beta}^{\gamma}$ is the 
structure constant:
$[x_{\alpha},x_{\beta}]=\sum_{\gamma}c_{\alpha,\beta}^{\gamma}x_{\gamma}$
and we have omitted the tensor product symbol.

The space
$H^{\bullet}(L\finmi,\Vg{k})$ inherits the 
vertex (super)algebra structure from
$C^{\bullet}(L\finmi,\Vg{k})$.
It follows that the space
$H^{\semiinf+\bullet}(L\finmi, M)$,
$M\in \BGG_k$,
is a module over
the vertex algebra $H^{\semiinf+\bullet}(L\finmi, M)$.
Similarly, 
$H^{\bullet}(L\finmi,L(k\Lam_0))$ is
naturally a vertex algebra,
and if $M\in \BGG_k$ is a $L(k\Lam_0)$-module 
then 
$H^{\semiinf+\bullet}(L\finmi, M)$
is a module over
$H^{\semiinf+\bullet}(L\finmi, L(k\Lam_0))$.

Define the rational number $k_i$
by  the formula
 \begin{align}
 k_i+2=\frac{2}{(\alpha_i|\alpha_i)}(k+h\che)=
\begin{cases}
  \frac{p}{q}&\text{if }\alpha_i\text{ is a long root},\\
\frac{r\che p}{q}&\text{if }\alpha_i\text{ is a short root.}
 \end{cases}
\label{eq;ki}
\end{align}
Note that
$k_i$ is an admissible number for $\wh{\mf{sl}}_2$.

Let
\begin{align*}
 V^{k_i}(\finli)=V^{k_i}(\mf{sl}_2^{(i)})\* \pi,
\end{align*}
where $\pi$ is the Heisenberg vertex algebra associated with
$\finh_i^{\bot}[t,t\inv]\+ \C K\subset \affg$ at level $k+h\che$:
\begin{align*}
 \pi=U(\finh_i^\bot[t,t\inv]\+ \C K)\*_{U(\finh_i^{\bot}[t]\+ \C K)
}\C_{k+h\che},
\end{align*}
where  $\C_{k+h\che}$ is the one-dimensional representation 
of $\finh_i^{\bot}[t]\+ \C K$ on which $\finh_i^\bot[t]$ acts trivially and
$K$ acts as the scalar $k+h\che$.

For
$x\in \finli$,
set
\begin{align*}
 & \widehat x(z)=x(z)-\sum_{\beta,\gamma\in \Delta(\finmi)}
(x_{-\gamma}|[x,x_{\beta}]):\psi_{\beta}^*(z)\psi_{\gamma}(z):,
\end{align*}
where
$x_{-\gamma}$ is a root vector of root $-\gamma$ such that
$(x_{-\gamma}|x_{\gamma})=1$.
Then
$x(z)\mapsto \widehat x(z)$,
$x\in \finli$,
gives a 
vertex algebra homomorphism
$ V^{k_i}(\finli)
\ra C^{\bullet}(L\finmi, \Vg{k})$.
Since $\widehat{x}(z)$,
$x\in \finli$,
 commutes with the action of $d$,
it induces a vertex algebra homomorphism
\begin{align}
 V^{k_i}(\finli)\ra H^{\semiinf+0}(L\finmi, \Vg{k}),
\end{align}
see e.g.\ \cite{HosTsu91} for the details of the above facts.
Thus,
$H^{\semiinf+0}(L\finmi, M)$
, $M\in \BGG_k$,
is a module over $V^{k_i}(\finli)$.
In particular,
it is a module over $\widehat{\mf{sl}}_2^{(i)}$  of level $k_i$,
which
 belongs to the category $\BGG_{k_i}$ of 
$\widehat{\mf{sl}}_2^{(i)}$.

Denote by  $V_{k_i}(\finli)$  the simple quotient of
$V^{k_i}(\finli)$.
We have
\begin{align*}
 V_{k_i}(\finli)=L_{\wh{\mf{sl}}_2^{(i)}}(k_i\Lam_0)\*\pi,
\end{align*}
where
$L_{\widehat{\mf{sl}}_2^{(i)}}(\mu)$
denotes the irreducible
representation of $\wh{\mf{sl}}_2^{(i)}$ with highest weight $\mu$.
The quotient map
$\Vg{k}\ra L(k\Lam_0)$ induces the vertex algebra homomorphism
$H^{\semiinf+0}(L\finmi ,\Vg{k})\ra
H^{\semiinf+0}(L\finmi ,L(k\Lam_0))$.
Hence we have the vertex algebra homomorphism
\begin{align}
 V^{k_i}(\finli)
\ra H^{\semiinf+0}(L\finmi ,L(k\Lam_0)).
\label{eq:va-hom}
\end{align}

\begin{Th}[{\cite[Theorem 7.5]{A-BGG}}]\label{Th:reduction-original}
The vertex algebra homomorphism \eqref{eq:va-hom}
factors through the 
vertex algebra homomorphism
\begin{align*}
 V_{k_i}(\finli)
\ra
 H^{\semiinf+0}(L\finmi ,L(k\Lam_0))
\end{align*}
In particular, the fields
$\widehat{e}_i(z),
\widehat{h}_i(z),
\widehat{f}_i(z)$
generate
the admissible affine
 vertex algebra $L_{\wh{\mf{sl}}_2^{(i)}}(k_i \Lam_0)$
in
$ H^{\semiinf+0}(L\finmi ,L(k\Lam_0))$.
\end{Th}
\begin{proof}
  For reader's convenience we shall sketch the proof here
(see \cite{A-BGG} for the details).
For a weight $\mu\in \dual{\affh}$
of level $k$,
let
$W(\mu)$ be 
the Wakimoto module of $\affg$ with highest weight
 $\mu$ (\cite{FeuFre90,Fre05}).
We have
\begin{align}
 H^{\semiinf+i}(L\finmi , W(\mu))
\cong \begin{cases}
					     W_{\finli}(\mu)
&\text{for
					     }i=0,\\
0&\text{for }i\ne 0,
					    \end{cases}
\end{align}
where
$W_{\finli}(\mu)=W_{\wh{\mf{sl}}_2^{(i)}}(\mu^{(i)})\*
 \pi_{\mu|_{\finh_i^\bot}}$,
$\mu^{(i)}$ is a weight of $\wh{\mf{sl}}_2^{(i)}$ defined by
\begin{align*}
\mu^{(i)}=\mu|_{\C h_i}+ k_i \Lam_0,
\end{align*}
$W_{\wh{\mf{sl}}_2^{(i)}}(\mu^{(i)})$ is the Wakimoto module
of $\wh{\mf{sl}}_2^{(i)}$ with highest weight $\mu^{(i)}$
and $\pi_{\gamma}$ is the simple
$\pi$-module with highest weight $\gamma$.
In particular
$W(\mu)$ is  acyclic with respect to  the  functor
$ H^{\semiinf+i}(L\finmi , ?)$.
Hence one may compute
the cohomology
$ H^{\semiinf+i}(L\finmi , M)$
by using a resolution of $M$ in terms of Wakimoto modules.

Set $\lam=k\Lam_0$.
For the
admissible representation $L(\lam)$ there is a natural choice for
 such a resolution,
which is a two-sided analogue of the usual BGG resolution:
there exists a complex
\begin{align}
 C^{\bullet}: \cdots\ra  C^{-1}\overset{d_{-1}}{\ra} C^0\overset{d_0}{\ra} C^1\overset{d_1}{\ra}\cdots
\label{eq:BGG}
\end{align}
of $\affg$-modules
such that 
\begin{align*}
&C^i=\bigoplus\limits_{w\in \affW(\lam)\atop
\ell_{\lam}^{\semiinf}(w)
=i} W(w\circ \lam),\quad d_i=\sum\limits_{w,w'\in \affW(\lam)
\atop
\ell_{\lam}^{\semiinf}(w)=i, 
w\rhd w'}d_{w,w'},\\
\text{and }& H^i(C^{\bullet})=\begin{cases}
		   L(\lam)&\text{for }i=0,\\ 0&\text{for }i\ne 0,
		  \end{cases}
\end{align*}
where 
$\ell_{\lam}^{\semiinf}(w)$
denotes the semi-infinite length of $w\in \affW(\lam)$,
the symbol $\rhd$ denotes the covering
in the semi-infinite Bruhat order,
and $d_{w,w'}$
for $w\rhd w'$ is a non-trivial $\affg$-module homomorphism
$W(w\circ \lam)\ra W(w'\circ \lam)$,
which is unique up to multiplication  by a nonzero constant.
The existence of such a resolution was conjectured in \cite{FKW92},
and was proved in \cite{A-BGG} by applying
 a result \cite{Fie06} of
Fiebig
and a method \cite{Ark96} of  Arkhipov:
Fiebig's equivalence between different blocks  of categories 
$\BGG$ shows the existence of the 
usual BGG resolution of $L(\lam)$, 
and Arkhipov's method enables us to transform the usual BGG resolution
to the two-sided BGG resolution, see \cite{A-BGG} for the details.

Now the space
$H^{\semiinf+\bullet}(L\finmi ,L(\lam))$ 
is isomorphic to the
cohomology of the complex
$
H^{\semiinf+0}(L\finmi , C^{\bullet})
$
obtained from \eqref{eq:BGG}
by applying the functor
$H^{\semiinf+0}(L\finmi ,?)$,
which
 has the following form:
\begin{align*}
 \cdots \ra \bigoplus_{w\in \affW(k\Lam_0)
\atop
\ell^{\semiinf}_{\lam}(w)=-1}W_{\finli}(w\circ \lam)\overset{d_{-1}'}{\ra}
 \bigoplus_{w\in \affW(k\Lam_0)
\atop
\ell^{\semiinf}_{\lam}(w)=0}W_{\finli}(w\circ \lam)\overset{d_{0}'}{\ra}
 \bigoplus_{w\in \affW(k\Lam_0)
\atop
\ell^{\semiinf}_{\lam}(w)=1}W_{\finli}(w\circ \lam)\overset{d_{1}'}{\ra}\cdots,
\end{align*}
where
$ d_i'=\sum\limits_{w,w'\in \affW(k\Lam_0)\atop \ell^{\semiinf}(w)=i,\
 w\rhd w'}d_{w,w'}'$
and $d_{w,w'}'$ is the homomorphism $W_{\finli}(w\circ \lam)\ra
 W_{\finli}(w'\circ \lam)$ induced by $d_{w,w'}$.
Observe that in this  realization of $H^{\semiinf+\bullet}(L\finmi ,L(\lam))$ 
the vertex algebra homomorphism
\eqref{eq:va-hom} is obtained from
the vertex algebra embedding
\begin{align*}
 V^{k_i}(\finli)\hookrightarrow W_{\finli}(k\Lam_0)\subset H^{\semiinf+0}(L\finmi ,C^0)
\end{align*}
described in  \cite[Theorem 5.1]{Fre05}.
Therefore 
in order to prove the assertion
it remains to show that
the maximal proper submodule $N$ of $V^{k_i}(\finli)$ is contained in the
image of $d'_{-1}$.
Note that
$N$ is tensor product of the maximal proper submodule 
of
$V^{k_i}(\mf{sl}_2^{(i)})$ 
 and  $\pi$.

Set
\begin{align*}
\dot{\alpha}_0^{(i)}=\begin{cases}
		      -\alpha_i+q\delta&\text{if }\alpha_i\text{ is a
		      long root or } r\che\not|q,\\
-\alpha_i+\frac{q}{r\che}\delta&\text{if }\alpha_i\text{ is a
		      short root and } r\che|q,
		     \end{cases}
\end{align*}
and put
$\dot{s}_0^{(i)}=s_{\dot{\alpha}_0^{(i)}}$.
Then
$\dot{\alpha}_0^{(i)}\in \Delta(k\Lam_0) $,
$\ell^{\semiinf}_{\lam}(\dot{s}_0^{(i)}
)=-1$,
$\dot{s}_0^{(i)}\rhd 1$,
and the maximal submodule of
$V^{k_i}(\mf{sl}_2)$ 
is generated by a singular vector of weight
$(\dot{s}_0^{(i)}
\circ k\Lam_0)^{(i)}$ (\cite{KacWak88}).
One sees that
the image of the highest weight vector
of $W_{\finli}(\dot{s}_0^{(i)}\circ k\Lam_0)$ in $W_{\finli}(k\Lam_0)$
by the homomorphism
$d_{\dot{s}_0^{(i)},1}'$ is nonzero and generates the
maximal submodule of $V^{k_i}(\finli)\subset W_{\finli}(k\Lam_0)$.
This completes the proof.
\end{proof}

\begin{Th}[{\cite[Theorem 7.6]{A-BGG}}]\label{Th:reduction}
If $M$ is a module over $\Vs{k}$,
the space $H^{\semiinf+r}(L\finmi , M)$,
$r\in \Z$,
 is a direct sum of
irreducible admissible representations of $\wh{\mf{sl}}_2^{(i)}$ of level
$k_i$.
\end{Th}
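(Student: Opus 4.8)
The plan is to reduce to the already-established $\wh{\mf{sl}}_2$ case. Concretely, for a $\Vs{k}$-module $M$ in $\BGG_k$ I will show that each cohomology group $H^{\semiinf+p}(L\finm_i, M)$ is a module, lying in the category $\BGG$ of $\wh{\mf{sl}}_2^{(i)}$, over the \emph{simple} affine vertex algebra $L_{k_i}(\mf{sl}_2^{(i)})$ of $\wh{\mf{sl}}_2^{(i)}$ at level $k_i$. Since $k_i$ is an admissible number by \eqref{eq;ki}, the $\wh{\mf{sl}}_2$ case of the Main Theorem, established by Adamovi\'c and Milas \cite{AdaMil95} (together with \cite{DonLiMas97,FeiMal97}), then forces any such module to be completely reducible into irreducible admissible representations of $\wh{\mf{sl}}_2^{(i)}$, which is exactly the desired conclusion.

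First I would verify that each $H^{\semiinf+p}(L\finm_i, M)$ lies in $\BGG$ for $\wh{\mf{sl}}_2^{(i)}$ and record the vertex-algebraic functoriality of the construction. The extended Cartan $\tilde{\finh}$ acts on the complex, so its $D$-grading and weight decomposition descend to cohomology; by Lemma \ref{Lem:top-part-of-cohomology} the top graded piece is computed by the finite-dimensional Lie algebra cohomology $H^\bullet(\finm_i, M_{[d_0]})$, and the same mechanism on each graded piece shows that $L_0$ acts locally finitely with finite-dimensional generalized eigenspaces and that the positive part of $\wh{\mf{sl}}_2^{(i)}$ acts locally nilpotently. Viewing $H^{\semiinf+\bullet}(L\finm_i, -)$ as Feigin's semi-infinite (BRST-type) cohomology, $H^{\semiinf+0}(L\finm_i, \Vg{k})$ is a vertex algebra, $H^{\semiinf+\bullet}(L\finm_i, N)$ is a module over it for every $\Vg{k}$-module $N$, and the surviving $\mf{sl}_2^{(i)}$-currents (with level shifted to $k_i$ by the fermionic contribution) realize a homomorphic image of the universal vertex algebra $V^{k_i}(\mf{sl}_2^{(i)})$ acting on the cohomology.

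The crucial point is to upgrade ``module over the universal algebra'' to ``module over the simple algebra'', that is, to show that the maximal ideal $N_{k_i}\subset V^{k_i}(\mf{sl}_2^{(i)})$ annihilates $H^{\semiinf+\bullet}(L\finm_i, M)$. By functoriality it suffices to prove that the composite $V^{k_i}(\mf{sl}_2^{(i)})\to H^{\semiinf+0}(L\finm_i, \Vs{k})$ kills $N_{k_i}$, equivalently that the singular vector generating $N_{k_i}$ maps to zero in $H^{\semiinf+0}(L\finm_i, \Vs{k})$. I would approach this through the short exact sequence $0\to N_k\to \Vg{k}\to \Vs{k}\to 0$ and the associated long exact sequence in semi-infinite cohomology, tracing the $\mf{sl}_2^{(i)}$-singular vector back to the image of the maximal ideal $N_k$ of $\Vg{k}$, and using the two-sided BGG resolution of $\Vs{k}$ from \cite{A-BGG} to make the relevant cohomology computation explicit.

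This last step is the main obstacle. The difficulty is that a vertex subalgebra of a simple vertex algebra need not itself be simple, so even knowing $H^{\semiinf+0}(L\finm_i, \Vs{k})$ is simple would not by itself force the $\mf{sl}_2^{(i)}$-subalgebra to be simple; one genuinely has to control how the singular vector generating $N_{k_i}$ sits relative to the image of $N_k$ under the reduction, which is precisely where the structure of the resolution and the exactness properties of the semi-infinite cohomology functor on it must be exploited.
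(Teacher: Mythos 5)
Note first that the paper contains no proof of Theorem \ref{Th:reduction} to compare against: the statement is imported verbatim from \cite{A-BGG}, so your proposal has to be measured against the argument given there. In outline you reproduce that argument correctly: the semi-infinite cohomology carries an action of a quotient of $V^{k_i}(\mf{sl}_2^{(i)})$ via the BRST formalism, the whole content is to show that the maximal ideal $N_{k_i}$ acts by zero (equivalently, that the action factors through the simple quotient $L(k_i\Lam_0)$), and then the known $\wh{\mf{sl}}_2$ case of rationality in category $\BGG$ \cite{AdaMil95} finishes the proof. You are also right, and commendably explicit, that a vertex subalgebra of a simple vertex algebra need not be simple, so the factorization is genuinely the crux.

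But that flagged step is exactly where your proposal stops being a proof. The long exact sequence attached to $0\to N_k\to \Vg{k}\to \Vs{k}\to 0$ gives exactness $H^{\semiinf+0}(L\finm_i,N_k)\to H^{\semiinf+0}(L\finm_i,\Vg{k})\to H^{\semiinf+0}(L\finm_i,\Vs{k})$, so ``tracing the singular vector back to the image of $N_k$'' is a restatement of what must be proved, not a mechanism: nothing formal places the image of the $\mf{sl}_2^{(i)}$-singular vector of $V^{k_i}(\mf{sl}_2^{(i)})$ inside the image of $H^{\semiinf+0}(L\finm_i,N_k)$, and the analogous factorization fails at general levels, which is precisely where admissibility must enter. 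In \cite{A-BGG} the step is not obtained by diagram chasing on this sequence; instead one computes $H^{\semiinf+\bullet}(L\finm_i,\Vs{k})$ outright, by applying the functor to the two-sided BGG resolution of $L(k\Lam_0)$ by direct sums of Wakimoto modules (whose existence at admissible level is the main theorem of that paper), using that each Wakimoto module has cohomology concentrated in a single degree which is again an $\wh{\mf{sl}}_2^{(i)}$ Wakimoto module. The resulting semi-infinite analogue of the generalized Borel--Weil theorem exhibits every $H^{\semiinf+p}(L\finm_i,L(k\Lam_0))$ as an explicit direct sum of irreducible admissible $\wh{\mf{sl}}_2^{(i)}$-modules; in particular the submodule generated by the image of the vacuum is a highest weight module embedded in a semisimple object, hence simple, hence equal to $L(k_i\Lam_0)$, and this is what kills the singular vector. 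So: right architecture and correct identification of the obstacle, but the decisive computation is pointed at rather than performed. A secondary loose end: to invoke the Adamovi\'c--Milas complete reducibility for a general $\Vs{k}$-module $M$ you must verify that the cohomology lands in a category (semisimple $\affh_i$-action, local finiteness of $L_0$, local nilpotence of the positive part) where that statement applies; your appeal to the mechanism of Lemma \ref{Lem:top-part-of-cohomology} gestures at this but does not establish it degree by degree.
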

\begin{proof}
By Theorem
\ref{Th:reduction-original},
$H^{\semiinf+\bullet}(L\finmi ,M)$
is a module over $L_{\widehat{\mf{sl}}_2^{(i)}}(k_i \Lam_0)
\subset V_{k_i}(\finli)$,
which belongs to the category 
$\BGG$ of $\widehat{\mf{sl}}_2^{(i)}$ of level $k_i$.
Hence the assertion follows immediately from
 \cite{AdaMil95}.
\end{proof}

Let
\begin{align*}
 L^{tot}(z)=\sum_{n\in \Z}L^{tot}_nz^{-n-2}
:=L(z)+:\sum_{\alpha\in \Delta(\finmi)}:
\psi_{\alpha}(z)\partial_z \psi_{\alpha}^*(z):.
\end{align*}
Then
$[L^{tot}_0,xt^n]=-n xt^n$
for $x\in \fing$,
$[L^{tot}_0, \psi_{\alpha,n}]=-n\psi_{\alpha,n}$,
$[L^{tot}_0, \psi_{\alpha,n}^*]=-n\psi_{\alpha,n}^*$.
It follows that
$L^{tot}$ commutes with $d$, and thus, acts on the space
$H^{\semiinf+0}(L\finmi, M)$.

For $M\in \BGG_k$,
let
$C^{\bullet}(L\finmi,M)_{d}\subset C^{\bullet}(L\finmi,M)$,
$H^{\semiinf+\bullet}(L\finmi, M)_d\subset
H^{\semiinf+\bullet}(L\finmi, M)$,
be
the generalized $L_0^{tot}$-eigenspaces of eigenvalue $d$.
Then
\begin{align*}
&C^{\bullet}(L\finmi,M)=\bigoplus_{d\in \C}C^{\bullet}(L\finmi,M)_{d}\\
 &H^{\semiinf+\bullet}(L\finmi, M)
=\bigoplus_{d\in \C}H^{\semiinf+
\bullet}(L\finmi, M)_{d}.
\end{align*}
Note that
$C^{\bullet}(L\finmi,M)_{d}$ is a subcomplex
of $C^{\bullet}(L\finmi,M)$
and 
$H^{\semiinf+
\bullet}(L\finmi, M)_{d}$ is the cohomology of the subcomplex
$C^{\bullet}(L\finmi, M)_{d}$.

\begin{Lem}\label{Lem:top-part-of-cohomology}
Let $M$ be a positively graded $\affg$-module in the category $\BGG_k$, 
and
let $d_{top}$ be the $L_0$-eigenvalue on $M_{top}$.
Then
$H^{\semiinf+\bullet}(L\finmi,M)=\bigoplus\limits_{n\in \Z_{\geq 0}}
H^{\semiinf+\bullet}(L\finmi,M)_{d_{top}+n}$
and
\begin{align*}
 H^{\semiinf+r}(L\finmi, M)_{d_{top}}\cong 
\begin{cases}
 H^{r}(\finmi,M_{top})&\text{if }r\geq 0,\\
0&\text{otherwise,}
\end{cases}
\end{align*}
where
$H^{\bullet}(\finmi,N)$
denotes the (usual)
Lie algebra $\finmi$-cohomology with coefficient in an $\finmi$-module $N$.
\end{Lem}
\begin{proof}
The first assertion is easy to see.
 The second assertion follows by observing 
that $C^{\bullet}(L\finmi, M)_{d_{top}}=M_{top}\otimes
\bw{\bullet }(\finmi)$ and the restriction of the
differential  to
$C^{\bullet}(L\finmi, M)_{d_{top}}$ coincides with the differential of
 the Chevalley complex for calculating 
$H^{\bullet}(\finmi,M_{top})$.
\end{proof}

\begin{Lem}\label{Lem:sl_2-adimissible-and-so}
Let $k$ be an admissible number
and 
suppose that $L(\lam)$ is a module over  $\Vs{k}$.
Then
$\lam^{(i)}=\lam|_{\C h_i}\+ k_i \Lam_0$
is  an admissible weight for $\wh{\mf{sl}}_2$
for all $i=1,\dots, l$.
\end{Lem}
\begin{proof}
Let $d_{top}$ be the top weight of
$L(\lam)$.
By Lemma \ref{Lem:top-part-of-cohomology},
$H^{\semiinf+0}(L\finmi, L(\lam))
=\bigoplus_{d\geq  
d_{top}}H^{\semiinf+0}(L\finmi, L(\lam))_{d}$
and
\begin{align*}
H^{\semiinf+0}(L\finmi, L(\lam))_{d_{top}}=
H^{0}(\finmi, \sL^{\fing}_{\bar \lam}).
\end{align*}
It follows that the image $[v_{\lam}]$
of the highest weight vector
$v_{\lam}$
of $L(\lam)$ 
is a non-zero singular vector of $\wh{\mf{sl}}_2^{(i)}$
of weight
$\lam^{(i)}$
in $H^{\semiinf+0}(L\finmi, L(\lam))$.
Hence   $[v_{\lam}]$ generates 
a (nonzero) highest weight $\wh{\mf{sl}}_2^{(i)}$-submodule
with highest weight $\lam^{(i)}$.
But according to    Theorem  \ref{Th:reduction}
such a module must be admissible.
This completes the proof.
\end{proof}
\begin{Pro}\label{Pro:integral-Weyl-groups-of-a-module}
 Suppose that $L(\lam)$ is a $\Vs{k}$-module.
Then $\wh{\Delta}(\lam)\cong \wh{\Delta}(k\Lam_0)$.
\end{Pro}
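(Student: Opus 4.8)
The plan is to reconstruct $\wh\Delta(\lam)$ one root-$\mf{sl}_2$ at a time from Lemma~\ref{Lem:sl_2-adimissible-and-so}, and to match the result against the explicit description of $\wh\Delta(k\Lam_0)$ in Proposition~\ref{Pro:admissible number}. First I would record the elementary computation that, for $\alpha\in\Delta$ and $n\in\Z$,
\begin{align*}
\bra \lam+\affrho,(\alpha+n\delta)\che\ket
=\bra \bar\lam+\rho,\alpha\che\ket
+n\,\frac{2}{(\alpha|\alpha)}(k+h\che).
\end{align*}
Writing $k+h\che=p/q$ and using \eqref{eq;ki}, the coefficient $c_\alpha:=\frac{2}{(\alpha|\alpha)}(k+h\che)$ equals $p/q$ for $\alpha$ long and $r\che p/q$ for $\alpha$ short; hence $\alpha+n\delta\in\wh\Delta(\lam)$ iff $\bra\bar\lam+\rho,\alpha\che\ket+n\,c_\alpha\in\Z$, and the ``$\alpha$-string'' $\{n\in\Z;\alpha+n\delta\in\wh\Delta(\lam)\}$ is, when nonempty, a single coset of $q_\alpha\Z$, where $q_\alpha$ is the denominator of $c_\alpha$: namely $q_\alpha=q$ for $\alpha$ long and $q_\alpha=q$ or $q/r\che$ for $\alpha$ short according to whether $(r\che,q)=1$ or $(r\che,q)=r\che$. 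The same function $\alpha\mapsto q_\alpha$ governs the strings of $\wh\Delta(k\Lam_0)$.

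Next I would feed in Lemma~\ref{Lem:sl_2-adimissible-and-so}. Since the displayed pairing for $\alpha=\alpha_i$ coincides with the corresponding pairing computed for $\lam^{(i)}$ inside $\wh{\mf{sl}}_2^{(i)}$, the $\alpha_i$-string of $\wh\Delta(\lam)$ is exactly the set of real roots of the $\wh{\mf{sl}}_2$-integral root system of $\lam^{(i)}$. As $\lam^{(i)}$ is admissible at level $k_i$, that root system is a rank-two affine root system; in particular each $\alpha_i$-string is nonempty and has period exactly $q_{\alpha_i}$. From nonemptiness for all $i$ I would deduce $\Q\wh\Delta(\lam)=\Q\wh{\Delta}^{re}$: a nonempty $\alpha_i$-string contains $\alpha_i+n_0\delta$ and $\alpha_i+(n_0+q_{\alpha_i})\delta$, whose difference gives $q_{\alpha_i}\delta\in\Q\wh\Delta(\lam)$ and hence $\alpha_i\in\Q\wh\Delta(\lam)$; as the $\alpha_i$ span $\finh^*$, this yields $\finh^*+\C\delta\subset\Q\wh\Delta(\lam)$. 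Thus $\wh\Delta(\lam)$ is a full-rank affine root system.

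It remains to pin down its isomorphism type. Because $\wh\Delta(\lam)$ is stable under the reflections $s_{\alpha_i+n_0\delta}$, whose projections modulo $\delta$ are $s_{\alpha_1},\dots,s_{\alpha_l}$, the image of $\wh\Delta(\lam)$ under $\tilde{\finh}^*\to\finh^*$ is a $W$-stable subset of $\Delta$ containing every simple root, hence equals $\Delta$. Therefore every $\alpha$-string with $\alpha\in\Delta$ is nonempty, and by the first paragraph it has period exactly $q_\alpha$---the same function of the length of $\alpha$ as for $\wh\Delta(k\Lam_0)$. Since both $\wh\Delta(\lam)$ and $\wh\Delta(k\Lam_0)$ are reduced affine root systems with classical projection $\Delta$ and the same period function, they are isomorphic, and $\wh\Delta(\lam)\cong\wh\Delta(k\Lam_0)$ follows.

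The step I expect to be the main obstacle is the very last one: upgrading the coincidence of classical projection and period function to an honest isomorphism of root systems. Unlike $\wh\Delta(k\Lam_0)$, for a general $\lam$ the individual strings need not contain $n=0$, so $\wh\Delta(\lam)$ is a \emph{shifted} periodic system; one must verify that the shifting cosets are forced to be mutually coherent (by the additivity of roots along root strings) and that any coherent choice with the prescribed projection and periods is abstractly isomorphic to the standard one, the residual freedom amounting only to a lattice translation in $\eW$. Carefully formalizing this combinatorial reconstruction, rather than the $\wh{\mf{sl}}_2$ bookkeeping, is where the real work lies.
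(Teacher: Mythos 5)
Your argument coincides with the paper's own proof up to and including the derivation of the paper's key intermediate statement \eqref{eq:missing}: the string computation $\bra\lam+\affrho,(\alpha+n\delta)\che\ket=\bra\bar\lam+\rho,\alpha\che\ket+n\tfrac{2}{(\alpha|\alpha)}(k+h\che)$, the identification of the $\alpha_i$-string of $\wh{\Delta}(\lam)$ with the rank-one integral root system of $\lam^{(i)}$, the nonemptiness of these strings via Lemma \ref{Lem:sl_2-adimissible-and-so}, and the deduction $\Q\wh{\Delta}(\lam)=\Q\wh{\Delta}^{re}$ are exactly the paper's steps. Your further observation that $\wh{\Delta}(\lam)$ is stable under the reflections in its own roots, so that its classical projection is a $W$-stable subset of $\Delta$ containing $\Pi$ and hence all of $\Delta$, is a correct and useful filling-in of the paper's terse ``Hence'' before \eqref{eq:missing}.

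The genuine gap is the final step, and you have in effect flagged it yourself. Where you attempt to prove directly that a reduced shifted-periodic system with classical projection $\Delta$ and period function $\alpha\mapsto q_\alpha$ must be abstractly isomorphic to $\wh{\Delta}(k\Lam_0)$, the paper does no such combinatorics: it invokes the Kac--Wakimoto classification \cite{KacWak89} twice, first to conclude that the full-rank system $\wh{\Delta}(\lam)$ is isomorphic to the integral root system of some admissible weight of level $k$, and then to note that among the root systems occurring in that classification only $\wh{\Delta}(k\Lam_0)$ is compatible with \eqref{eq:missing} --- the classification also contains admissible weights whose integral root system projects onto a proper full-rank subsystem of $\Delta$ (e.g.\ long roots only), and nonemptiness of \emph{every} $\alpha$-string is precisely what excludes them. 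Your substitute, the assertion that the shift data $s_\alpha\in\Z/q_\alpha\Z$ is coherent and, up to abstract isomorphism (indeed up to a translation in $\eW$), trivial, is stated but not proved; as you say, that is ``where the real work lies,'' and without it the proof does not close. The claim is true --- in the simply-laced case additivity of the pairing along sums of roots makes $\alpha\mapsto s_\alpha$ additive, hence the restriction of a lattice homomorphism, and so realized by a translation; the non-simply-laced case requires careful bookkeeping with the two periods $q$ and $q/r\che$ --- but until that cocycle-to-coboundary argument is written out (or replaced by the citation of \cite{KacWak89}, as in the paper), your proof is incomplete at exactly the point where the paper is complete by appeal to the classification.
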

\begin{proof}
 By Lemma \ref{Lem:sl_2-adimissible-and-so},
 $\bra \lam+\rho,\alpha_i\che\ket \in
 \frac{2}{(\alpha_i|\alpha_i)q}\Z$
for all $i=1,\dotsm l$.
It follows that
there exists
 $n_i\in \Z$
for each  $i=1,\dots, l$  
such that
$\alpha_i+n_i\delta\in \wh{\Delta}(\lam)$.
Hence
\begin{align}
\text{there exists $n_{\alpha}\in \Z$
for each $\alpha\in \Delta$
such that
 } \alpha+n_{\alpha}\delta\in \wh{\Delta}(\lam).
\label{eq:missing}
\end{align}
This implies that  $\Q\wh{\Delta}(\lam)=\Q \wh{\Delta}^{re}$.
Therefore  \cite{KacWak89}
$\wh{\Delta}(\lam)$ is isomorphic to the integral root system
of some admissible weight of level $k$.
But according to
 the classification of admissible weights 
by Kac and Wakimoto \cite{KacWak89},
the integral roots of admissible weights satisfying
 the property (\ref{eq:missing})
must be  isomorphic to $\wh{\Delta}(k\Lam_0)$.
\end{proof}

\begin{Pro}\label{Pro:the case-G-integrable-only-if}
 Let $k$ be an admissible number and 
$\lam\in \affP$ (see \eqref{eq:affP}).
Suppose that
$L(\lam)$ is a module over $\Vs{k}$.
Then $L(\lam)$ is admissible.
\end{Pro}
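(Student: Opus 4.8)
The plan is to verify the two defining properties of an admissible weight for $\lam$: regular dominance and the full-rank condition $\Q\wh{\Delta}(\lam)=\Q\wh{\Delta}^{re}$. The latter is exactly Proposition \ref{Pro:integral-Weyl-groups-of-a-module}, which gives $\wh{\Delta}(\lam)\cong\wh{\Delta}(k\Lam_0)$. For regular dominance I would first extract finite dominance: since $\lam\in\affP$ forces $\lam^{(i)}(h_i)=\bra\lam,\alpha_i\che\ket\in\Z$, and an admissible weight of $\wh{\mf{sl}}_2$ with integral finite part has nonnegative finite part, Lemma \ref{Lem:sl_2-adimissible-and-so} yields $\bra\lam,\alpha_i\che\ket\ge 0$ for $i=1,\dots,l$; thus $\lam$ is $\fing$-dominant and $\Delta\subset\wh{\Delta}(\lam)$. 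Consequently a simple system of $\wh{\Delta}(\lam)$ is $\{\alpha_1,\dots,\alpha_l,\beta_0\}$ for a single affine simple root $\beta_0$ (the analogue of $\dot{\alpha_0}$ in Proposition \ref{Pro:admissible number}). Since $\bra\lam+\affrho,\alpha_i\che\ket=\bra\lam,\alpha_i\che\ket+1\ge 1$, and every positive real root outside $\wh{\Delta}(\lam)$ pairs non-integrally with $\lam+\affrho$ while positive coroots of $\wh{\Delta}(\lam)$ are nonnegative combinations of the simple ones, regular dominance collapses to the single inequality $\bra\lam+\affrho,\beta_0\che\ket\ge 1$, which is precisely the Kac--Wakimoto bound $\bra\lam,\theta\ket\le p-h\che$ (resp.\ $\bra\lam,\theta_s\che\ket\le p-h$) defining $Pr_k^+$.

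The bound in this ``affine direction'' is the crux, and it is not a formal consequence of what precedes: the semi-infinite restrictions along the finite simple $\finm_i$ only see the $\alpha_i$-strings, so the constraints $\bra\lam,\alpha_i\che\ket\le p-2$ they produce are far too weak to bound $\bra\lam,\theta\ket$ (already for $\fing=\mf{sl}_3$ they give only $\bra\lam,\theta\ket\le 2p-4$ in place of $p-3$). To reach $\beta_0$ I would argue by contradiction with the affine Duflo--Joseph Lemma \ref{Lem:affin-Duflo-Joseph}. Assume $\bra\lam+\affrho,\beta_0\che\ket=-m\le 0$. Then $\beta_0$ is the unique positive root of $\wh{\Delta}(\lam)$ inverted by $s_{\beta_0}$ and it pairs non-positively, while every other inverted positive real root lies outside $\wh{\Delta}(\lam)$ and hence pairs non-integrally; so the hypothesis of Lemma \ref{Lem:affin-Duflo-Joseph} holds for $w=s_{\beta_0}$ and $\overline{s_{\beta_0}\circ\lam}\in\zeroset(J_k)$, i.e.\ $L(s_{\beta_0}\circ\lam)$ is again a $\Vs{k}$-module. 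Applying finite dominance (Lemma \ref{Lem:sl_2-adimissible-and-so}) to this new module forces its finite part $\bar\lam-m\theta$ to be $\fing$-dominant, which is impossible once $m$ is large; this already yields the bound outside a bounded range of borderline weights.

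The borderline cases (small $m$, where $\bar\lam-m\theta$ is still dominant and $s_{\beta_0}\circ\lam$ is itself admissible) are where I expect the real difficulty, since there neither the reflection nor the finite-simple reductions alone produce a contradiction. To close them I would use the diagram-automorphism symmetry: by the result of Li invoked in Lemma \ref{Lem:affin-Duflo-Joseph}, $L(\pi\circ\lam)$ is a $\Vs{k}$-module for every $\pi\in\eW_+$, and for most types a suitable $\pi$ carries $\beta_0$ to a finite simple root, after which Lemma \ref{Lem:sl_2-adimissible-and-so} applied to $\pi\circ\lam$ controls exactly the $\beta_0$-direction. The hard part will be a uniform treatment across all types, in particular the trivial-center types $E_8,F_4,G_2$ where $\eW_+$ is trivial; there I would instead feed the higher cohomology $H^{\semiinf+p}(L\finm_i,L(\lam))$ with $p>0$ into Theorem \ref{Th:reduction}, using Lemma \ref{Lem:top-part-of-cohomology} and Kostant's theorem to turn its top parts $H^{p}(\finm_i,V_{\fing}(\bar\lam))$ into admissibility constraints on the reflected weights $w(\bar\lam+\rho)-\rho$, and combine these to reach the $\theta$-direction.
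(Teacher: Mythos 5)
Your setup is sound and matches the paper up to the crux: finite dominance of $\bar\lam$ from Lemma \ref{Lem:sl_2-adimissible-and-so}, the root-system identification from Proposition \ref{Pro:integral-Weyl-groups-of-a-module}, and the correct diagnosis that regular dominance collapses to the single affine inequality $\bra \lam+\affrho,\beta_0\che\ket\geq 1$, i.e.\ $\bra\bar\lam,\theta\ket\leq p-h\che$ (resp.\ $\bra\bar\lam,\theta_s\che\ket\leq p-h$), and that the degree-zero reductions alone cannot produce this bound. The genuine gap is in your main tool for the crux. The reflection-plus-contradiction argument via Lemma \ref{Lem:affin-Duflo-Joseph} fails not merely on ``borderline'' weights but on the bulk of the region to be excluded. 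Since $\bar\lam$ is already confined to the box $0\leq\bra\bar\lam,\alpha_i\che\ket\leq p_i-2$, the defect $m=-\bra\lam+\affrho,\beta_0\che\ket$ can never be ``large'' relative to the box, and whenever $m\,\bra\theta,\alpha_i\che\ket\leq\bra\bar\lam,\alpha_i\che\ket$ for all $i$ the reflected finite part $\bar\lam-m\theta$ remains dominant and your test detects nothing. Concretely, for $\fing=\mf{sl}_3$ take $\bra\bar\lam,\alpha_1\che\ket=\bra\bar\lam,\alpha_2\che\ket=p-2$: then $m=p-2$, and $s_{\beta_0}\circ\lam$ has finite pairings $(0,0)$ and lies in $Pr_k^+$, so $L(s_{\beta_0}\circ\lam)$ genuinely \emph{is} a $\Vs{k}$-module (Theorem \ref{Th:Frenkel-Malikov}) and no contradiction can be extracted from it by any further test, Lemma \ref{Lem:sl_2-adimissible-and-so} included --- yet $\lam$ itself is non-admissible and must be excluded. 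Your fallback fixes are likewise not carried out: for $q>1$ the elements of $\eW_+$ permute $\wh\Pi$ but do not carry $\beta_0=-\theta+q\delta$ to a finite simple root, and the pairing of $\lam+\affrho$ with $\alpha_0\che$ is non-integral, so the clean integral $\wh{\mf{sl}}_2$ box is not what you land in; the $E_8$, $F_4$, $G_2$ branch is left as a hope.

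What makes the paper's proof work --- and what your final sentence gestures at without executing --- is one specific choice inside the higher-cohomology machinery: pick any $w\in\finW$ with $w\inv(\alpha_i)=\theta$ for a long simple root $\alpha_i$ when $(r\che,q)=1$ (resp.\ $w\inv(\alpha_i)=\theta_s$ for a short one when $(r\che,q)=r\che$). Then $w\in\finW^{(i)}$ automatically, Kostant's theorem together with Lemma \ref{Lem:top-part-of-cohomology} puts $L^{\finl_i}_{w\circ\bar\lam}$ in the top graded piece of $H^{\semiinf+\ell(w)}(L\finm_i,L(\lam))$, Theorem \ref{Th:reduction} forces $(w\circ\lam)^{(i)}$ to be an admissible $\wh{\mf{sl}}_2$-weight, and the one-line computation $\bra w\circ\lam,\alpha_i\che\ket=\bra\lam+\rho,\theta\che\ket-1=\bra\lam,\theta\ket+h\che-2\leq p-2$ yields exactly the required bound, uniformly in type, with no case analysis by center, no use of $\eW_+$, and no reflection argument at all. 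So your proposal assembles the right ingredients but stops short of the decisive step; as written it establishes the proposition only outside a large family of weights on which your contradiction cannot occur.
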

\begin{proof}
 First, it follows that 
\begin{align*}
 \text{$\bra \lam,
\alpha_i\che\ket \geq 0$ for $i=1,\dots,l$
}
\end{align*}from Lemma \ref{Lem:sl_2-adimissible-and-so}.
Therefore
the $\fing$-submodule $L^{\fing}_{\bar \lam}$
generated by the highest weight vector
of $L(\lam)$ is finite-dimensional.
By \cite{Kos61}
we have
\begin{align*}
 H^r(\finmi, L^{\fing}_{\bar \lam}))\cong \bigoplus_{w\in \finW^{(i)}\atop
\ell(w)=r} L^{\finli}_{w\circ \bar \lam}
\end{align*}
for $r\geq 0$
as $\finli$-modules,
where
$L^{\finli}_{\bar \mu}$ is the irreducible highest weight
 representation
of $\finli$ with highest weight $\bar \mu$
and
\begin{align*}
 \finW^{(i)}=\{w\in \finW; w\inv (\alpha_i)\in \Delta_+
 \}.
\end{align*}
Therefore
 \begin{align*}
  H^{\semiinf+r}(L\finmi, L(\lam))_{d_{top}}
\cong \bigoplus_{w\in \finW^{(i)}\atop
\ell(w)=r} L^{\finli}_{w\circ \bar \lam}
 \end{align*}
by Lemma \ref{Lem:top-part-of-cohomology}.
Since
each $\finli$-highest weight vector of 
$  H^{\semiinf+r}(L\finmi, L(\lam))_{d_{top}}$
generates 
a highest weight representation
of $\wh{\mf{sl}}_2^{(i)}$,
the weights
$(w\circ \lam)^{(i)}$
 must be admissible
by Theorem \ref{Th:reduction},
that is,
\begin{align}
(w\circ \lam)^{(i)}
\in Pr_{\Z}^{(i),k_i}
\text{ for all $w\in W^{(i)}$,
$i=1,\dots, l$.
}\label{eq:condidion}
\end{align}
where 
$Pr_{\Z}^{(i),k_i}
$ denotes 
the set $Pr^{k_i}_{\Z}$ for $\widehat{\mf{sl}}_2^{(i)}$.

Now first consider the case that
$(r\che, q)=1$.
It remains to show 
that $\bra \lam,\theta\ket\leq p-h^{\vee}$, see \eqref{eq:Pr+}.
Let $\alpha_i$ be any  simple long root of $\fing$,
so that
 there exists $w\in \finW$
such that $\theta=w\inv (\alpha_i)$.
Since
$(w\circ \lam)^{(i)}\in Pr_{\Z}^{(i),k_i}$
and  $k_i+2=p/q$,
we have
\begin{align*}
 p-2\geq \bra w\circ  \lam,\alpha_i\che\ket=
\bra w(\lam+\rho),\alpha_i\che\ket-1
=\bra \lam+\rho, \theta\ket -1
=\bra \lam,\theta\ket +h\che -2.
\end{align*}
We have shown that
 $\lam\in Pr_{\Z}^k$.

Next consider the case that  $(r\che,q)=r\che$.
We need to
show that
$\bra \lam,\theta_s\che\ket\leq  p-h$.
Let $\alpha_i$ be any  simple short root of $\fing$.
Then there exists $w\in \finW$ such that $\theta_s=w\inv(\alpha_i)$.
Since
$(w\circ \lam)^{(i)}\in Pr_{\Z}^{(i),k_i}$
and  $k_i+2=r\che p/q=p/(q/r\che)$,
we have
\begin{align*}
 p-2\geq \bra w\circ  \lam,\alpha_i\che\ket=
\bra w(\lam+\rho),\alpha_i\che\ket-1
=\bra \lam+\rho, \theta_s\che\ket -1
=\bra \lam,\theta_s\che\ket +h -2.
\end{align*}
Hence
 $\lam\in Pr_{\Z}^k$ as required.
\end{proof}
 \begin{proof}[Proof of the ``only if'' part of Main Theorem]
Suppose that $L(\lam)$ is a module over  $\Vs{k}$.
By Proposition \ref{Pro:integral-Weyl-groups-of-a-module},
$\wh{\Delta}(\lam)\cong \wh{\Delta}(k\Lam_0)$.
Therefore 
by \cite[Lemma 2.1]{KacWak89} there exist 
$y\in \eW$
such that 
$\widehat{\Delta}(\lam)_+=y(\widehat{\Delta}(k\Lam_0)_+)
$.
Let $\mu=y\inv \circ \lam$.
Then 
\begin{align*}
\wh\Delta(\mu)_+=\wh\Delta(k\Lam_0)_+=y\inv (\widehat{\Delta}(\lam)_+).
\end{align*}
This gives 
on the one hand
that 
$\mu\in \affP$
and
on the other hand
$\bra \mu+\rho,\alpha\che\ket\not\in \Z$
for $\alpha\in \widehat{\Delta}^{re}_+\cap y(\widehat{\Delta}^{re}_-)$.
The latter condition implies that
$L(\mu)$ is also  a module over
$\Vs{k}$ by Lemma \ref{Lem:affin-Duflo-Joseph}.
But then Proposition \ref{Pro:the case-G-integrable-only-if}
implies that 
$\mu$ must be an admissible weight
 since $\mu\in \affP$.
Therefore
$\lam$ is also an admissible weight.
This completes the proof of the ``only if'' part of Main Theorem.
 \end{proof}
The last assertion of Main Theorem
now follows immediately 
from the fact that
$\on{Ext}_{\BGG_k}^1(L(\lam),L(\mu))=0$
 for any 
admissible 
weights $\lam,\mu$
of level $k$
since 
they are regular dominant weights.
In fact it is known by \cite[Theorem 0.2]{GorKac0905}
that  $\on{Ext}_{\affg}^1(L(\lam),L(\mu))=0$ for any 
admissible weights
$\lam,\mu$.
(Note that it is obvious 
that
$\on{Ext}_{\BGG_k}^1(L(\lam),L(\lam))=0$,
but it is highly  non-trivial that
$\on{Ext}_{\affg}^1(L(\lam),L(\lam))=0$.)
Hence we have the following slightly more general assertion
than 
the last assertion of Main Theorem.
\begin{Pro}
 Let
$M$ be a finitely generated $L(k\Lam_0)$-module
on which 
$\affn_+$ locally nilpotently and 
$\affh$ acts locally finitely.
Then $M$ is a direct sum of 
admissible representations of $\affg$
whose integral Weyl groups are isomorphic to that of $L(k\Lam_0)$.
\end{Pro}
\qed

\appendix
\section{Generalized semi-infinite Borel-Weil-Bott Theorem
for admissible representations}
\label{section:Generalized-Borel-Weil-Bott}

Let $\finp$
 be a parabolic subalgebra 
of $\fing$
containing
$\finb_-$,
and 
let
$\finp=\finl\+\finr_-$
be the direct sum decomposition of $\finp$
with the Levi subalgebra $\finl$  containing $\finh$
and the nilpotent radical 
 $\finm_-$.
Denote by  $\finm\subset \finn$ the opposite algebra of $\finm_-$,
so that
$\fing=\finp\+\finm$.
Let
\begin{align*}
 \finl=\finl_0\+\bigoplus_{i=1}^s \finl_i
\end{align*}
be the decomposition of $\finl$ into 
direct sum of 
simple Lie subalgebras $\finl_i$,
$i=1,\dots, s$, and its center $\finl_0$ of $\finl$.
Let $\finh_i=\finl\cap \finh$,
the Cartan subalgebra of $\finl_i$,
and 
denote by
$\sDelta_i\subset \sDelta$ the subroot system of $\fing$ corresponding
 to $\finl_i$,
$\Delta_{i,+}=\Delta_i\cap \Delta_+$,
$\sPi_i=\sPi\cap \sroots_i$.
Let  $h_i\che$ be the dual Coxeter number of $\finl_i$ (with a convention
that $h_0\che=0$),
  $\theta_i$ the highest  root of $\sDelta_i$,
$\theta_{i,s}$ 
the highest short roof of $\sDelta_i$.

Let 
$
\affl_i=\finl_i[t,t\inv]\+ \C K\subset \affg
$
for 
$i=0,1,\dots,s$.
Set
\begin{align*}
 K_i=\frac{2}{(\theta_i|\theta_i)}K,
\end{align*}
and we consider $K_i$ as an element of $\affl_i$.
Thus,
\begin{align*}
\affl_i=\finl_i[t,t\inv]\+ \C K_i.
\end{align*}
Then
$\affh_i=\finh_i\+ \C K_i$ is a Cartan subalgebra
of $\affl_i$.
Set  $\affh^*_i=\finh_i^*\+ \C \Lam_{0,i}\subset \affh^*$,
where $\Lam_{0,i}=\frac{(\theta_i|\theta_i)}{2}\Lam_0$,
and we regard $\affh^*_i$ as the dual of $\affh_i$.

Set 
\begin{align*}
 \widehat{ \Delta}_{\finl_i}^{re}
=\{\alpha+n\delta\in
 \widehat{\Delta}^{re};
\alpha_i\in \Delta_i\}
\end{align*}
Then $\widehat{ \Delta}_{\finl_i}^{re}$ is a subroot system of 
$\wh{\Delta}^{re}$.
We regard 
$\widehat{ \Delta}_{\finl_i}^{re}$
as a real root system of 
$\affl_i$.
In particular subgroup 
of $\affW$
generated $s_{\alpha}$, $\alpha\in \widehat{\Delta}^{re}_+$,
is identified with the Weyl group
$\affW_i$ of $\affl_i$.

For $\lam\in \affh^*$
define
$\lam_i\in \affh_i^*$,
$i=0,1,\dots, s$, by
\begin{align*}
\bra  \lam_i+\widehat{\rho}_i,\alpha\che +n K_i\ket=
\bra \lam +\widehat{\rho},\alpha\che
 +\frac{2n}{(\theta_i|\theta_i)}K\ket
\quad\text{for }\alpha\che\in \finh_i,
\end{align*}
where $\widehat{\rho}_i=\rho_i+h_i\che \Lam_{0,i}$,
$\rho_i=\sum_{\alpha\in \Delta_{i,+}}\alpha/2$.
Observe that  if 
$\lam$ is an admissible weight of  $\affg$ level $k$, 
then $\lam_i$ is an admissible weight of $\affl_i$
of level $k_i$ for $i=1,2,\dots, s$,
where 
\begin{align*}
 k_i+h_i\che=\frac{2}{(\theta_i|\theta_i)}(k+h\che).
\end{align*}

Define Lie algebras
\begin{align*}
 \affl=\bigoplus_{i=0}^s \affl_i,
\quad \afft=\bigoplus_{i=0}^s\affh_i\subset \affl.
\end{align*}
For $\lam\in \dual{\affh}$,
set
\begin{align*}
 \lam_{\finl}=\sum_{i=0}^s \lam_i\in \afft^*,
\end{align*}
and let
$L_{\finl}(\lam_{\finl})$ be the
irreducible highest weight representation of $\affl$ with highest weight $\lam_{\finl}$.
Then
$L_{\finl}(\lam_{\finl})=\bigotimes_{i=0}^s L_{\finl_i}(\lam_i)$,
where $L_{\finl_i}(\lam_i)$ is the irreducible highest weight
representation
of $\affl_i$ with highest weight $\lam_i$.
The weight 
$\lam_{\finl}\in \dual{\afft}$ is called admissible
if $\lam_i$ is admissible for all $i\ne 0$,
and if this is the case
the $\affl$-module $L_{\finl}(\lam)$ is called admissible.

Now 
let $k$ be an admissible  number for $\affg$, and let $\lam\in Pr_{\Z}^k$.
Set
\begin{align*}
 \widehat{\Delta}_{\finl}(\lam)=\bigcup_{i=1}^s
 \widehat{\Delta}_{\finl_i}(\lam),
\quad
\widehat{ \Delta}_{\finl_i}(\lam)=
\widehat{\Delta}^{re}_{\finl_i}\cap  \widehat{\Delta}(\lam),
\end{align*}
and let
$\affW_{\finl}(\lam)$ be the subgroup
of $\affW(\lam)$ generated by
$s_{\alpha}$,
$\alpha\in \widehat{\Delta}_{\finl}(\lam)$.

Define
\begin{align*}
 \widehat{W}^{\finl}(\lam)=\{w\in \affW(\lam); w^{-1}(\widehat{\Delta}(\lam)_+)
\subset \widehat{\Delta}^{re}_+\}.
\end{align*}
Then 
 the multiplication map
$\widehat{W}_{\finl}(\lam)\times \widehat{W}^{\finl}(\lam)\ra \affW(\lam)$,
$(u,v)\mapsto uv$,
is a bijection. Moreover we have
\begin{align*}
 \ell^{\semiinf}_{\lam}(uv)=\ell^{\semiinf}_{\lam}(u)+\ell^{\semiinf}_{\lam}(v)
\end{align*}
for $u\in \affW_{\finl}(\lam)$, $v\in \affW^{\finl}(\lam)$
(\cite{Pet}, see \cite[Theorem 3.3]{A-BGG}).

 \begin{Lem}\label{Lem:easy}
\begin{enumerate}
 \item
      For $w\in \affW(\lam)$,
$(w\circ \lam)_{\finl}$ 
is admissible  if and only if 
$w\in \affW^{\finl}(\lam)$.
\item For $w,w'\in \affW^{\finl}(\lam)$,
$(w\circ \lam)_{\finl}=(w'\circ \lam)_{\finl}$ if and only if $w=w'$.
\end{enumerate}
 \end{Lem}

Let $L\finm=\finm[t,t\inv]\subset \affg$.
The semi-infnite $L\finm$-cohomology 
$H^{\semiinf+0}(L\finm,L(k\Lam_0))$
 is naturally an $\affl$-module.
In fact, 
as in the case that
$\finp$ is a minimal parabolic subalgebra (see Theorem
\ref{Th:reduction-original}),
there is an injective vertex algebra homomorphism
\begin{align}
L_{\finl}((k\Lam_0)_{\finl})
=
\bigotimes_{i=0}^s L_{\finl_i}(k_i \Lam_{i,0})
\hookrightarrow 
H^{\semiinf+0}(L\finm,L(k\Lam_0))
\end{align}
provided that $k$ is admissible (\cite[Theorem 7.5]{A-BGG}).
Since $L(\lam)$ is a module over 
$L(k\Lam_0)$,
$H^{\semiinf+p}(L\finm,L(\lam))$ is a module over the vertex algebra 
$H^{\semiinf+0}(L\finm,L(k\Lam_0))$,
and hence, is a module over 
$L_{\finl}((k\Lam_0)_{\finl})$.
Since each $k_i$ is an admissible number for $\affl_i$ for all $i\ne 0$,
it follows from Main Theorem that
$H^{\semiinf+p}(L\finm,L(\lam))$
 is a direct sum of admissible
representations of
$\affl$ for each  $p\in \Z$.

The following assertion 
can be proved in the same manner as
in \cite[Theorem 7.7]{A-BGG}.
\begin{Th}\label{Th:gen-Borel-Weil}
 Let $k$ be an admissible number for $\affg$,
$\lam\in Pr_{\Z}^k$.
For each $p\in \Z$ we have the  $\affl$-module isomorphism
\begin{align*}
 H^{\semiinf+p}(L\finm,L(\lam))
\cong \bigoplus_{w\in \affW^{\finl}(\lam)
\atop \ell^{\semiinf}_{\lam}(w)=p}L_{\finl}((w\circ \lam)_\finl).
\end{align*}
\end{Th}

 \begin{Rem}
Although an admissible affine vertex algebra has only  finitely many
  isomorphism classes of simple
  modules in the category $\BGG$
the  sum in Theorem \ref{Th:gen-Borel-Weil}
  is infinite since $\affl_0$ is a Heisenberg  algebra
which has infinitely many isomorphism classes of simple modules
in the category $\BGG$.

Theorem \ref{Th:gen-Borel-Weil}
should be regarded as an analogue of Kostant's generalized
  Borel-Weil-Bott Theorem \cite{Kos61}  appeared in the proof of 
Proposition \ref{Pro:the case-G-integrable-only-if}.
It has been proved in \cite{HosTsu91} in the case that 
$L(\lam)$ is an integrable representation of $\affg$
using the fact that integrable representations are unitarizable,
following the original idea of Kostant's proof.
Their proof does not apply to our case
since
 non-integrable admissible representations are not
  unitarizable.

We also note that in the above formula 
it was essential that  $L(\lam)$ 
is an admissible representations in order to
 to apply Main Theorem for $\affl_i$, $i\ne 0$.
Otherwise,
the 
complete reduciblility does not necessarily hold and the  
cohomology 
$H^{\semiinf+p}(L\finm,L(\lam))$ may be very complicated.

 \end{Rem}


\end{document}